\numberwithin{equation}{section}
\newtheorem{theorem}{Theorem}[section]
\newtheorem{lemma}{Lemma}[section]
\newtheorem{proposition}{Proposition}[section]
\newtheorem{remark}{Remark}[section]
\newtheorem{corollary}[theorem]{Corollary}
\title{Equivalence of minimal time and minimal norm control problems for semilinear heat equations}
\author{Huaiqiang Yu\thanks{School of Mathematics and Statistics, Wuhan University,
    Wuhan, 430072, P. R. China. Email: huaiqiangyu@whu.edu.cn}}
\date{}
\begin{document}
\maketitle
\begin{abstract}
    In this paper, we establish the equivalence of minimal time and minimal norm control problems for semilinear heat equations in which the controls are distributed internally in an open subset of the state domain. As an application, the Bang-Bang property for minimal norm controls are also presented.
\vskip 8pt
    \noindent{\bf Keywords.} minimal time control, minimal norm control,
    Bang-Bang property, semilinear heat equation
\vskip 8pt
    \noindent{\bf 2010 AMS Subject Classifications.} 93C10, 93C20
\end{abstract}
\section{Introduction}
    Let $\Omega\subset\mathbb{R}^N$ ($N\in\mathbb{N}$) be a bounded domain with a smooth boundary $\partial\Omega$ and $\omega$ be an open and nonempty subset of $\Omega$. Denote by $\chi_\omega$ the characteristic function of the set $\omega$. Let $T$ be a positive number and write $\mathbb{R}^+\equiv(0,+\infty)$.
\par
    In the present paper, we consider the following two controlled heat equations
\begin{equation}\label{e-1-1}
\begin{cases}
    y_t-\triangle y+f(y)=\chi_\omega u&\mbox{in}\;\;\Omega\times\mathbb{R}^+,\\
    y=0&\mbox{on}\;\;\partial\Omega\times\mathbb{R}^+,\\
    y(0)=y_0&\mbox{in}\;\;\Omega
\end{cases}
\end{equation}
    on the time interval $[0,+\infty)$ and
\begin{equation}\label{e-1-2}
\begin{cases}
    y_t-\triangle y+f(y)=\chi_\omega v&\mbox{in}\;\;\Omega\times(0,T),\\
    y=0&\mbox{on}\;\;\partial\Omega\times(0,T),\\
    y(0)=y_0&\mbox{in}\;\;\Omega
\end{cases}
\end{equation}
    on the finite time interval $[0,T]$, where the initial state $y_0$ is assumed to be a nontrivial function in $L^2(\Omega)$, and $u$ and $v$ are the controls taken accordingly from the spaces $L^\infty(\mathbb{R}^+;L^2(\Omega))$ and $L^\infty(0,T;L^2(\Omega))$. The solutions of (\ref{e-1-1}) and (\ref{e-1-2}), denoted by $y(\cdot;u,y_0)$ and $y(\cdot;v,y_0)$, are considered to be  functions of the time variable $t$ from $[0,+\infty)$ and $[0,T]$ to the space $L^2(\Omega)$, respectively.
\par
    Let $r>0$ be a constant. For each $T>0$ and each $M>0$, we define the following two admissible sets of controls:
\vskip 5pt
    \noindent$\mathcal{U}_M=\{u\in L^\infty(\mathbb{R}^+;L^2(\Omega)):\|u(\cdot)\|_{L^2(\Omega)}\leq M\;\;\mbox{a.e. in}\;\;\mathbb{R}^+\\
    ~~~~~~~~~~~~~~~~~~~~~~~~~~~~~~\mbox{and}\;\;\exists t>0\;\;\mbox{s.t.}\;\;y(t;u,y_0)\in B(0,r)\}$;
\vskip 5pt
    \noindent$\mathcal{V}_T=\{v\in L^\infty(0,T;L^2(\Omega)): y(T;v,y_0)\in B(0,r)\}$;
\vskip 5pt
    \noindent where $B(0,r)$ is the closed ball in $L^2(\Omega)$ centered at the original point and of radius $r$.
\par
    In this paper, we assume that
\vskip 5pt
    \noindent $(H_1)$ $f:\mathbb{R}\to\mathbb{R}$ is continuously differentiable with $|f'(y)|\leq L$ and $f(y)y\geq 0$ for any $y\in\mathbb{R}$, where $f'(y)$ is the derivative of $f$ in $y\in\mathbb{R}$ and $L>0$ is a constant;
\vskip 5pt
    \noindent $(H_2)$ The initial state $y_0$ satisfies $y_0\notin B(0,r)$.
\vskip 5pt
    \noindent
    It is obvious that the assumption $(H_1)$ implies that $f(0)=0$. Under the assumptions
    $(H_1)$ and $(H_2)$, it is well known that for each $u\in\mathcal{U}_M$ and each $y_0\in L^2(\Omega)$,  Equation
    (\ref{e-1-1}) has a unique solution $y(t;u,y_0)$ in $C([0,T];L^2(\Omega))\cap L^2(0,T;H_0^1(\Omega))$ (see Page 500, Chapter 9 in \cite{b14}). Moreover, for each $t\in\mathbb{R}^+$
\begin{equation}\label{p-1-2}
    \|y(t;0,y_0)\|_{L^2(\Omega)}\leq e^{-\lambda_1t}\|y_0\|_{L^2(\Omega)},
\end{equation}
    where $\lambda_1>0$ is the first eigenvalue for the operator $-\triangle$ with the domain $D(-\triangle)=H_0^1(\Omega)\cap H^{2}(\Omega)$.
    The proof of (\ref{p-1-2}) will be given in the appendix of our paper. From this decay property of Equation (\ref{e-1-1}) with $u\equiv0$, we know that the set $\mathcal{U}_M$ is nonempty. Indeed, $0\in\mathcal{U}_M$. Furthermore, as a consequence of the approximate controllability property of Equation (\ref{e-1-2}) for any fixed $T>0$ (see Theorem 1.4 in \cite{b2}), we have that the set $\mathcal{V}_T$ is also nonempty.
\par
    Now, for each admissible control $u\in\mathcal{U}_M$ of the infinite horizon control problem, we define a cost functional:
\begin{equation}\label{e-1-3}
    T(u)=\inf\{t>0;y(t;u,y_0)\in B(0,r)\}.
\end{equation}
    In this paper, the following two control problems are studied:
\vskip 5pt
    \noindent$(NP)_T$  $\;\;\inf_{v\in\mathcal{V}_T}\{\|v\|_{L^\infty(0,T;L^2(\Omega))}\}$;
\vskip 5pt
    \noindent$(TP)_M$  $\;\;\inf_{u\in\mathcal{U}_M}\{T(u)\}$.
\vskip 5pt
    \noindent The problem $(NP)_T$ is called minimal norm control problem (or optimal norm control problem) and the problem $(TP)_M$ is called minimal time control problem (or optimal time control problem).
    Following the symbols of \cite{b3}, we define the following two real value functions:
\begin{equation}\label{e-1-4}
    \alpha(T)\equiv\inf_{v\in\mathcal{V}_T}\{\|v\|_{L^\infty(0,T;L^2(\Omega))}\}\;\;\mbox{and}
    \;\;\tau(M)\equiv\inf_{u\in\mathcal{U}_M}\{T(u)\},
\end{equation}
    as the minimal (or optimal) norm and the minimal (or optimal) time for Problems $(NP)_T$ and $(TP)_M$, respectively. If a control $v^*_T\in L^\infty(0,T;L^2(\Omega))$ such that $y(T;v^*_T,y_0)\in B(0,r)$ and $\|v^*_T\|_{L^\infty(0,T;L^2(\Omega))}=\alpha(T)$, then it is called  the optimal norm control (or minimal norm control) to Problem $(NP)_T$. Similarly, if a control $u^*_M\in L^\infty(\mathbb{R}^+;L^2(\Omega))$ such that $y(\tau(M);u^*_M,y_0)\in B(0,r)$ and $\|u^*_M\|_{L^\infty(\mathbb{R}^+;L^2(\Omega))}\leq M$, then it is called  the optimal time control (or minimal time control) to Problem $(TP)_M$. In this paper, we let
$$
    \gamma(y_0)\equiv\inf\{t>0;y(t;0,y_0)\in B(0,r)\}.
$$
    By (\ref{p-1-2}), we know that $\gamma(y_0)<+\infty$ for any $y_0\in L^2(\Omega)$.
\par
    The main result of this paper can be presented as follows:
\begin{theorem}\label{main-theorem}
    Suppose that $(H_1)$ and $(H_2)$ hold. For each $T\in(0,\gamma(y_0)]$, the norm optimal control $v^*_T$ to Problem $(NP)_T$, when extended by zero to $(T,+\infty)$ is the optimal time control to $(TP)_{\alpha(T)}$. Conversely, for each $M\geq0$, the optimal time control $u^*_M$ to Problem $(TP)_M$, when restricted over $(0,\tau(M))$ is the optimal norm control to $(NP)_{\tau(M)}$.
\end{theorem}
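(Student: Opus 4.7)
The plan rests on proving an ``endpoint-on-boundary'' lemma: for any $T\in(0,\gamma(y_0))$ with $\alpha(T)>0$, every norm-optimal $v_T^*$ for $(NP)_T$ satisfies $\|y(T;v_T^*,y_0)\|_{L^2(\Omega)}=r$, and for any $M\ge 0$ with $\tau(M)>0$, every time-optimal $u_M^*$ for $(TP)_M$ satisfies $\|y(\tau(M);u_M^*,y_0)\|_{L^2(\Omega)}=r$. Both are short perturbative arguments: for a norm-optimal $v_T^*$ with strict interior endpoint, continuous dependence of $y(T;\cdot,y_0)$ on the control (from $|f'|\le L$ via Gronwall) implies that $\lambda v_T^*$ for $\lambda<1$ close enough to $1$ still lies in $\mathcal V_T$, contradicting $\|v_T^*\|=\alpha(T)$; for a time-optimal $u_M^*$ with strict interior endpoint, continuity of $t\mapsto\|y(t;u_M^*,y_0)\|_{L^2(\Omega)}$ gives a $t<\tau(M)$ with $y(t;u_M^*,y_0)\in B(0,r)$, violating the infimum. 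The boundary case $T=\gamma(y_0)$ is disposed of separately, as $\alpha(T)=0$, $v_T^*=0$, and the zero extension is trivially time-optimal for $(TP)_0$.

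For the first direction, let $\tilde v$ be the zero extension of $v_T^*$; then $\tilde v\in\mathcal U_{\alpha(T)}$, whence $\tau(\alpha(T))\le T(\tilde v)\le T$. Assume for contradiction that $\tau(\alpha(T))<T$ and pick $u\in\mathcal U_{\alpha(T)}$ with $T':=T(u)<T$. Define $\hat u:=u\chi_{(0,T')}$ extended by zero on $(T',T)$. Since $y(T';u,y_0)\in B(0,r)$, applying (\ref{p-1-2}) to the free dynamics on $(T',T)$ gives $\|y(T;\hat u,y_0)\|_{L^2(\Omega)}\le e^{-\lambda_1(T-T')}r<r$. But $\hat u\in\mathcal V_T$ and $\|\hat u\|_{L^\infty(0,T;L^2(\Omega))}\le\alpha(T)$, so $\hat u$ is norm-optimal, and the endpoint-on-boundary lemma forces $\|y(T;\hat u,y_0)\|_{L^2(\Omega)}=r$, a contradiction. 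Hence $\tau(\alpha(T))=T$, so $T(\tilde v)=T=\tau(\alpha(T))$ and $\tilde v$ is time-optimal for $(TP)_{\alpha(T)}$.

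For the converse, set $T^*=\tau(M)$ and $u^*:=u_M^*|_{(0,T^*)}$; then $u^*\in\mathcal V_{T^*}$ with $\|u^*\|\le M$, so $\alpha(T^*)\le\|u^*\|$. Suppose for contradiction $\alpha(T^*)<\|u^*\|\le M$ and let $v_{T^*}^*$ be a norm-optimal control for $(NP)_{T^*}$, with $\|v_{T^*}^*\|=\alpha(T^*)<M$ (so the slack $\delta:=M-\alpha(T^*)$ is strictly positive). I will produce a perturbation $w$ with $\|v_{T^*}^*+w\|\le M$ and $\|y(T^*;v_{T^*}^*+w,y_0)\|_{L^2(\Omega)}<r$; continuity of the trajectory then gives $\delta'>0$ with $y(T^*-\delta';v_{T^*}^*+w,y_0)\in B(0,r)$, and its zero extension lies in $\mathcal U_M$ with $T$-value $<T^*=\tau(M)$, contradicting optimality. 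To build $w$, I linearize around the trajectory of $v_{T^*}^*$: the difference $z(t)=y(t;v_{T^*}^*+w,y_0)-y(t;v_{T^*}^*,y_0)$ solves a linear heat equation with bounded potential (since $|f'|\le L$), whose approximate controllability (descending from Theorem~1.4 in \cite{b2}) together with linear scaling yields small-norm $w$ with $z(T^*)$ approximating $-\eta\,y(T^*;v_{T^*}^*,y_0)$ for any small $\eta>0$; the Lipschitz $O(\|w\|^2)$ nonlinear remainder is absorbed, giving $\|y(T^*;v_{T^*}^*+w,y_0)\|_{L^2(\Omega)}\le(1-\eta)r+O(\eta^2)<r$.

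The chief obstacle is precisely this controllability-via-linearization step in the converse direction: one must carefully verify that for any prescribed inward direction, the linearized system admits a control of arbitrarily small $L^\infty(0,T^*;L^2(\Omega))$ norm steering the linearized endpoint into that direction, and then absorb the nonlinear correction. A cleaner alternative is to use the first direction to deduce that $\alpha$ is strictly decreasing on $(0,\gamma(y_0)]$ (via $\tau\circ\alpha=\mathrm{id}$), prove continuity of $\alpha$ there, and conclude $\tau=\alpha^{-1}$, whence $\alpha(\tau(M))=M$ trivially yields the converse; but the required continuity of $\alpha$, established by a weak-compactness argument on minimizing sequences of norm-optimal controls, would carry the bulk of the remaining technical work.
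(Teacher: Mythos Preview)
Your first direction is correct and genuinely different from the paper's. The paper proves $\tau(\alpha(T))=T$ only after establishing that $\tau(\cdot)$ is continuous and strictly monotone (Steps~1--5 of Theorem~\ref{m-theorem1}), then invokes the intermediate value theorem to find $\widetilde M<\alpha(T)$ with $\tau(\widetilde M)=T$. Your endpoint-on-boundary lemma plus the decay estimate~(\ref{p-1-2}) bypasses all of that machinery: the extended-by-zero control $\hat u$ lands strictly inside $B(0,r)$ at time $T$, which is impossible for a norm minimizer. This is more economical.

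The converse direction, however, has a real gap. Your linearization argument asserts that the nonlinear remainder is $O(\|w\|^2)$, but under $(H_1)$ you only have $f\in C^1$ with $|f'|\le L$; there is no bound on $f''$, and without one the remainder $f(y^*+z)-f(y^*)-f'(y^*)z$ is at best $o(|z|)$ pointwise, with no uniform rate usable in $L^2(\Omega)$. So the estimate $\|y(T^*;v_{T^*}^*+w,y_0)\|\le(1-\eta)r+O(\eta^2)$ is unjustified, and the perturbation may not land strictly inside the ball. Your fallback suggestion (prove continuity of $\alpha$ and invert) is viable but, as you note, is essentially the same weak-compactness work the paper does for $\tau$.

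There is a much shorter route you overlook, and it is exactly what the paper exploits: the Bang-Bang property (Corollary~\ref{p-corollary-1}). Suppose $\alpha(\tau(M))<M$ and let $v^*$ be norm-optimal for $(NP)_{\tau(M)}$; its zero extension lies in $\mathcal U_M$ and reaches $B(0,r)$ at time $\tau(M)$, hence is time-optimal for $(TP)_M$. Bang-Bang then forces $\|v^*(t)\|_{L^2(\Omega)}=M$ a.e.\ on $(0,\tau(M))$, contradicting $\|v^*\|_{L^\infty}=\alpha(\tau(M))<M$. Thus $\alpha(\tau(M))=M$, and since Bang-Bang also gives $\|u_M^*|_{(0,\tau(M))}\|_{L^\infty}=M$, the restriction is norm-optimal. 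Combined with your first-direction argument, this yields the theorem without any continuity analysis of $\tau$ or $\alpha$ and without linearization.
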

\par
    The equivalence of minimal time and minimal norm control problems governed by infinite
    dimensional systems were found in many papers or books (see \cite{b7, b12, b13, b8, b3, b11}). In the case of the control acts globally into the controlled heat equation, i.e. $\omega=\Omega$, the related results were listed in \cite{b7}. Recently, when the control acts locally into the controlled heat equation, the same results were established in \cite{b8, b3}. These results are important to study the properties of minimal norm control problems, for instance, Bang-Bang property, explicit formula and the uniqueness of optimal controls to these problems. Besides, it also can be used to study the approximate property for perturbed time optimal control problems (see \cite{b9}).
\par
    The equivalence problem also appears for the controlled wave equation (see \cite{b12, b13}). However, in contrast to the heat equation, for the wave equation, as we know, the corresponding optimal control do not have the Bang-Bang property. Therefore, in general the time optimal controls are not uniquely determined (see Theorem 9.1 in \cite{b12}). Another difference between the two cases is that, in contrast to the result given in Theorem \ref{m-theorem1} of our paper, the value function corresponding to $\alpha(T)$ for the wave equation is in general not continuous (see \cite{b13}).
\par
    It is worth noting that all  controlled equations mentioned above are linear. In this paper, we shall establish the equivalence of minimal time and minimal norm control problems governed by semilinear heat equations. Moreover, the Bang-Bang property for minimal norm controls (see Corollary \ref{m-corollary1}) can be obtained in our paper. This property for the linear heat equation has proved by \cite{b8} in which the controlled equation is linear and the target set is the original point.
    As far as we know, no paper gets the Bang-Bang property of minimal norm controls for the semilinear controlled system. In contrast to the linear case, when the controlled equation is semilinear, we must exploit an abstract criterion to  show the compactness of the constructed sequences. For this, we can see the proofs of (\ref{p-1-21}) and (\ref{m-1-15}), and so on. On the other hand, in our case, we generally cannot deduce the uniqueness of optimal controls  to $(TP)_M$ and $(NP)_T$ from the Bang-Bang property.
\par
    The rest of the paper is organized as follows: Section 2 presents some necessary lemmas which play the important roles in our paper. In Section 3, we shall give the proof of Theorem \ref{main-theorem}. As a consequence of the main theorem, the Bang-Bang property for minimal norm controls are also presented in this section. The proof of (\ref{p-1-2}) will be given in the appendix.

\section{Preliminaries}
    In this section, we shall present some necessary lemmas for the proof of our main result.
    The first lemma concerns the existence of minimal time controls for Problem $(TP)_M$.
\begin{lemma}\label{p-lemma-2}
    Under the assumptions $(H_1)$ and $(H_2)$, for each $M>0$, the problem $(TP)_M$ has a control $u^*_M\in L^\infty(\mathbb{R}^+;L^2(\Omega))$ such that
\begin{equation}\label{p-1-7}
    \|u^*_M\|_{L^\infty(\mathbb{R}^+;L^2(\Omega))}\leq M
\end{equation}
    and
\begin{equation}\label{p-1-8}
    y(\tau(M);u^*_M,y_0)\in B(0,r).
\end{equation}
\end{lemma}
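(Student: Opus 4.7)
The plan is to obtain $u^*_M$ as a weak-$*$ limit of a minimizing sequence and then pass to the limit in the semilinear state equation, exploiting parabolic regularity to upgrade weak convergence to strong convergence where needed.

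First I would fix $M>0$ and pick a minimizing sequence $\{u_n\}\subset\mathcal{U}_M$ together with times $t_n\searrow\tau(M)$ satisfying $\|u_n\|_{L^\infty(\mathbb{R}^+;L^2(\Omega))}\le M$ and $y(t_n;u_n,y_0)\in B(0,r)$. Since $\tau(M)\le\gamma(y_0)<+\infty$ (testing with $u\equiv0$), the relevant time horizon is bounded, so we may fix some $T_0>\gamma(y_0)$ and regard each $u_n$ as an element of the ball of radius $M$ in $L^\infty(0,T_0;L^2(\Omega))$. By the Banach--Alaoglu theorem (using $L^\infty(0,T_0;L^2(\Omega))=(L^1(0,T_0;L^2(\Omega)))^*$), a subsequence, still denoted $\{u_n\}$, converges weakly-$*$ to some $u^*\in L^\infty(0,T_0;L^2(\Omega))$ with $\|u^*\|_{L^\infty(0,T_0;L^2(\Omega))}\le M$. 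I extend $u^*$ by $0$ on $(T_0,+\infty)$.

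Next I would pass to the limit in the state equation. Writing $y_n=y(\cdot;u_n,y_0)$, standard energy estimates for (\ref{e-1-1}) (using $(H_1)$, in particular $f(y)y\ge0$ and $|f'|\le L$) give uniform bounds for $\{y_n\}$ in $L^\infty(0,T_0;L^2(\Omega))\cap L^2(0,T_0;H_0^1(\Omega))$, and for $\{\partial_t y_n\}$ in $L^2(0,T_0;H^{-1}(\Omega))$. By the Aubin--Lions lemma, $\{y_n\}$ is relatively compact in $L^2(0,T_0;L^2(\Omega))$ and in $C([0,T_0];H^{-1}(\Omega))$, so along a further subsequence $y_n\to y^*$ strongly in $L^2(0,T_0;L^2(\Omega))$ and a.e. on $\Omega\times(0,T_0)$. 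The a.e. convergence plus the Lipschitz bound $|f(y_n)-f(y^*)|\le L|y_n-y^*|$ yields $f(y_n)\to f(y^*)$ in $L^2(0,T_0;L^2(\Omega))$, which combined with the weak-$*$ convergence of $\chi_\omega u_n$ lets me identify $y^*=y(\cdot;u^*,y_0)$ as the unique solution of (\ref{e-1-1}) with control $u^*$.

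Finally, I would verify that $u^*$ is admissible and optimal. Since $y_n(t_n)\in B(0,r)$ and $t_n\to\tau(M)$, I want to conclude $y^*(\tau(M))\in B(0,r)$. The estimate
\begin{equation*}
\|y_n(t_n)-y^*(\tau(M))\|_{H^{-1}(\Omega)}\le \|y_n(t_n)-y^*(t_n)\|_{H^{-1}(\Omega)}+\|y^*(t_n)-y^*(\tau(M))\|_{H^{-1}(\Omega)}
\end{equation*}
shows convergence of $y_n(t_n)$ to $y^*(\tau(M))$ in $H^{-1}(\Omega)$ (using $C([0,T_0];H^{-1}(\Omega))$-convergence and continuity in time of $y^*$), while the uniform $L^2$-bound on $\{y_n(t_n)\}$ gives weak $L^2$-convergence; lower semicontinuity of the $L^2$-norm under weak convergence then gives $\|y^*(\tau(M))\|_{L^2(\Omega)}\le r$. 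Hence $u^*\in\mathcal{U}_M$ and $T(u^*)\le\tau(M)$, so by definition $T(u^*)=\tau(M)$, proving (\ref{p-1-7}) and (\ref{p-1-8}) with $u^*_M=u^*$.

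The main obstacle is the semilinear term: weak-$*$ convergence of $u_n$ alone is insufficient to pass to the limit in $f(y_n)$, so the compactness step via Aubin--Lions (which is precisely the ``abstract criterion'' the author alludes to in the introduction) is the essential nonlinear ingredient; once strong $L^2$-convergence of the states is in hand, the rest is a routine lower-semicontinuity and continuity argument.
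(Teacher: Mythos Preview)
Your argument is correct and follows the same overall strategy as the paper: take a minimizing sequence, extract a weak-$*$ limit of the controls, use uniform energy estimates plus the Aubin--Lions compactness theorem to obtain strong $L^2((0,T_0)\times\Omega)$ convergence of the states, and pass to the limit in the nonlinearity via the global Lipschitz bound on $f$. The one substantive difference is in the endgame. The paper upgrades the space--time $L^2$ convergence to \emph{uniform} convergence in $C([0,\tau(M)+\eta];L^2(\Omega))$ by a separate energy estimate on $z_n=y_n-\tilde y$, and then splits $\|y_n(T_n)-\tilde y(\tau(M))\|_{L^2(\Omega)}$ into a ``same-time'' piece and a ``time-shift'' piece handled via the semigroup. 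You instead invoke Simon's version of Aubin--Lions to get $y_n\to y^*$ in $C([0,T_0];H^{-1}(\Omega))$, identify the weak $L^2$ limit of $y_n(t_n)$ as $y^*(\tau(M))$, and conclude by weak lower semicontinuity of the $L^2$ norm (i.e.\ weak closedness of $B(0,r)$). Your route is slightly more economical since weak convergence at the terminal time suffices; the paper's route yields the stronger information $y_n\to\tilde y$ in $C([0,T];L^2(\Omega))$, which it reuses later (e.g.\ in the proof of Theorem~\ref{m-theorem1}).
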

\begin{proof}
    First, we note that from (\ref{p-1-2}), the admissible control set $\mathcal{U}_M$ is nonempty. Indeed, $0\in\mathcal{U}_M$. We assume that $T_n\searrow \tau(M)$ and $y(T_n;u_n,y_0)\in B(0,r)$, where $u_n\in\mathcal{U}_M$. Without loss of generality, we assume that $T_n\in[\tau(M),\tau(M)+\eta]$, where $\eta>0$ is a constant.
    Since $\{u_n\}_{n\in\mathbb{N}}$ is bounded in $L^\infty(\mathbb{R}^+;L^2(\Omega))$, we can conclude that there exist a subsequence, still denoted in the same way, and a control $\tilde{u}\in\mathcal{U}_M$ such that
\begin{equation}\label{p-1-9}
    \chi_{(0,\tau(M)+\eta)}u_n\to \tilde{u}\;\;\mbox{weakly star in}\;\;L^\infty(\mathbb{R}^+;L^2(\Omega))\;\;\mbox{as}\;\;n\to\infty.
\end{equation}
\par
    Next, we shall show that there exists a subsequence of $\{u_n\}_{n\in\mathbb{N}}$, still denoted in the same way, such that
\begin{equation}\label{p-1-10}
    \|y(T_n;u_n,y_0)-y(\tau(M);\tilde{u},y_0)\|_{L^2(\Omega)}\to 0\;\;\mbox{as}\;\;n\to\infty.
\end{equation}
\par
    For this purpose, we first prove that
\begin{equation}\label{p-1-11}
    \|y(\tau(M);u_n,y_0)-y(\tau(M);\tilde{u},y_0)\|_{L^2(\Omega)}\to 0\;\;\mbox{as}\;\;n\to \infty.
\end{equation}
    For simplicity, we let $y_n(t)\equiv y(t;u_n,y_0)$ and $\tilde{y}(t)\equiv y(t;\tilde{u},y_0)$. Multiplying the equation (\ref{e-1-1}) by $y_n$, where $u$ is replaced by $u_n$, and integrating on $\Omega$, we get
\begin{equation}\label{p-1-12}
    \frac{1}{2}\frac{d}{dt}\|y_n(t)\|^2_{L^2(\Omega)}+\|\nabla y_n(t)\|^2_{L^2(\Omega)}+\langle f(y_n(t)),y_n(t)\rangle_{L^2(\Omega)}
    =\langle\chi_\omega u_n(t),y_n(t)\rangle_{L^2(\Omega)}.
\end{equation}
    From $(H_1)$, we have
\begin{equation}\label{p-1-13}
    \frac{1}{2}\frac{d}{dt}\|y_n(t)\|_{L^2(\Omega)}^2+\|\nabla y_n(t)\|_{L^2(\Omega)}^2\leq
    \frac{M}{4}+\|y_n(t)\|_{L^2(\Omega)}^2.
\end{equation}
 This means
\begin{equation}\label{p-1-14}
    \|y_n(t)\|_{L^2(\Omega)}^2\leq \|y_0\|_{L^2(\Omega)}^2+\frac{M}{2}(\tau(M)+\eta)
    +2\int_0^t\|y_n(t)\|_{L^2(\Omega)}^2dt
\end{equation}
    and
\begin{equation}\label{p-1-15}
    \int_0^{\tau(M)+\eta}\|\nabla y_n(t)\|^2_{L^2(\Omega)}dt\leq \frac{1}{2}\|y_0\|_{L^2(\Omega)}^2+\frac{M}{4}(\tau(M)+\eta)+\int_0^{\tau(M)+\eta}\|y_n(t)\|^2_{L^2(\Omega)}dt.
\end{equation}
    By the Gronwall inequality and (\ref{p-1-14}), we get
\begin{equation}\label{p-1-16}
    \sup_{t\in[0,\tau(M)+\eta]}\|y_n(t)\|_{L^2(\Omega)}\leq \left(\frac{M}{2}(\tau(M)+\eta)+\|y_0\|^2_{L^2(\Omega)}\right)^{\frac{1}{2}}
    e^{\tau(M)+\eta}
\end{equation}
    and
\begin{eqnarray}\label{p-1-17}
    &\;&\int_0^{\tau(M)+\eta}\|\nabla y_n(t)\|_{L^2(\Omega)}^2dt\nonumber\\
    &\leq&\left(e^{2(\tau(M)+\eta)}(\tau(M)+\eta)+\frac{1}{2}\right)\left(\frac{M}{2}(\tau(M)+\eta)+\|y_0\|_{L^2(\Omega)}^2\right).
\end{eqnarray}
    Hence
\begin{equation}\label{p-1-18}
    \{y_n\}_{n\in\mathbb{N}}\;\;\mbox{is bounded in}\;\;C([0,\tau(M)+\eta];L^2(\Omega))\cap
    L^2(0,\tau(M)+\eta;H_0^1(\Omega)).
\end{equation}
    However,
\begin{eqnarray}\label{p-1-19}
    &\;&\|(y_n)_t\|_{L^2(0,\tau(M)+\eta;H^{-1}(\Omega))}\nonumber\\
    &\leq&\|\triangle y_n\|_{L^2(0,\tau(M)+\eta;H^{-1}(\Omega))}+\|f(y_n)\|_{L^2(0,\tau(M)+\eta;H^{-1}(\Omega))}
    +\|\chi_\omega u_n\|_{L^2(0,\tau(M)+\eta;H^{-1}(\Omega))}\nonumber\\
    &\leq&(L+1)\|y_n\|_{L^2(0,\tau(M)+\eta;H_0^1(\Omega))}
    +M(\tau(M)+\eta)^{\frac{1}{2}}.
\end{eqnarray}
    Therefore,
\begin{equation}\label{p-1-20}
    \{(y_n)_t\}_{n\in\mathbb{N}}\;\;\mbox{is bounded in}\;\;L^2(0,\tau(M)+\eta;H^{-1}(\Omega)).
\end{equation}
    From Aubin's theorem (see Page 24 in \cite{b10}), there exists a subsequence of $\{y_n\}_{n\in\mathbb{N}}$, still denoted in the same way and $\bar{y}$ such that
\begin{equation}\label{p-1-21}
\begin{cases}
    y_n\to\bar{y}\;\;\mbox{strongly in}\;\; L^2(0,\tau(M)+\eta;L^2(\Omega))\;\;\mbox{as}\;\;n\to \infty,\\
    y_n\to\bar{y}\;\;\mbox{weakly in}\;\; L^2(0,\tau(M)+\eta;H_0^1(\Omega))\;\;\mbox{as}\;\;n\to \infty,\\
    (y_n)_t\to\bar{y}_t\;\;\mbox{weakly in}\;\; L^2(0,\tau(M)+\eta;H^{-1}(\Omega))\;\;\mbox{as}\;\;n\to \infty.
\end{cases}
\end{equation}
    Next, we show that $\bar{y}=\tilde{y}$. For this purpose, we only need to show the following identity holds:
\begin{equation}\label{p-1-22}
    \bar{y}_t-\triangle\bar{y}+f(\bar{y})=\chi_\omega\tilde{u}
    \;\;\mbox{in}\;\;L^2(0,\tau(M)+\eta;H^{-1}(\Omega)).
\end{equation}
    We first note that
\begin{eqnarray}\label{p-1-23}
    &\;&\int_0^{\tau(M)+\eta}\|f(y_n(t))-f(\bar{y}(t))\|_{L^2(\Omega)}^2dt\nonumber\\
    &\leq& L^2\int_0^{\tau(M)+\eta}\|y_n(t)-\bar{y}(t)\|_{L^2(\Omega)}^2dt\to 0\;\;\mbox{as}\;\;n\to\infty.
\end{eqnarray}
    From the definition of weak solution, we get that for any $\varphi\in L^2(0,\tau(M)+\eta;H_0^1(\Omega))$ and $n\in\mathbb{N}$,
\begin{equation}\label{p-1-24}
    \langle(y_n)_t-\triangle y_n+f(y_n)-\chi_\omega u_n,\varphi\rangle_{L^2(0,\tau(M)+\eta;H^{-1}(\Omega)),L^2(0,\tau(M)+\eta;
    H_0^1(\Omega))}=0.
\end{equation}
    By (\ref{p-1-21}) and (\ref{p-1-23}), letting $n\to\infty$ in the above identity, we have
\begin{equation}\label{p-1-25}
    \langle \bar{y}_t-\triangle \bar{y}+f(\bar{y})-\chi_\omega\tilde{u},\varphi
    \rangle_{L^2(0,\tau(M)+\eta;H^{-1}(\Omega)),L^2(0,\tau(M)+\eta;
    H_0^1(\Omega))}=0.
\end{equation}
    Hence $\bar{y}=\tilde{y}$. Let $z_n\equiv y_n-\tilde{y}$, then $z_n$ satisfies that
\begin{equation}\label{p-1-26}
\begin{cases}
    (z_n)_t-\triangle z_n+f(y_n)-f(\tilde{y})=\chi_\omega(u_n-\tilde{u})&\mbox{in}
    \;\;\Omega\times(0,\tau(M)+\eta),\\
    z_n=0&\mbox{on}\;\;\partial\Omega\times(0,\tau(M)+\eta),\\
    z_n(0)=0&\mbox{in}\;\;\Omega.
\end{cases}
\end{equation}
    Multiplying this equation by $z_n$ and integrating on $\Omega$, we have
\begin{eqnarray}\label{p-1-27}
    &\;&\frac{1}{2}\frac{d}{dt}\|z_n(t)\|^2_{L^2(\Omega)}+\|\nabla z_n(t)\|^2_{L^2(\Omega)}\nonumber\\
    &\leq&\langle f(\tilde{y}(t))-f(y_n(t)),z_n(t)\rangle_{L^2(\Omega)}
    +\langle\chi_\omega(u_n(t)-\tilde{u}(t)),z_n(t)\rangle_{L^2(\Omega)}.
\end{eqnarray}
    From (\ref{p-1-21}), this means that
\begin{eqnarray}\label{p-1-28}
    &\;&\frac{1}{2}\sup_{t\in{[0,\tau(M)+\eta]}}\|z_n(t)\|_{L^2(\Omega)}^2\nonumber\\
    &\leq& L\int_0^{\tau(M)+\eta}\|z_n(s)\|_{L^2(\Omega)}^2ds
    +\int_0^{\tau(M)+\eta}\|u_n(s)-\tilde{u}(s)\|_{L^2(\Omega)}
    \|z_n(s)\|_{L^2(\Omega)}ds\nonumber\\
    &\leq&L\int_0^{\tau(M)+\eta}\|z_n(s)\|_{L^2(\Omega)}^2ds
    +2M\int_0^{\tau(M)+\eta}\|z_n(s)\|_{L^2(\Omega)}ds\nonumber\\
    &\;&\to 0\;\;\mbox{as}\;\;n\to\infty.
\end{eqnarray}
    Hence
\begin{equation}\label{p-1-29}
    \|y_n-\tilde{y}\|_{C([0,\tau(M)+\eta];L^2(\Omega))}
    =\sup_{t\in[0,\tau(M)+\eta]}\|z_n(t)\|_{L^2(\Omega)}\to 0\;\;\mbox{as}\;\;n\to\infty.
\end{equation}
    This gives (\ref{p-1-11}).
\par
    Next, we prove that
\begin{equation}\label{p-1-30}
    \|y(T_n;u_n,y_0)-y(\tau(M);u_n,y_0)\|_{L^2(\Omega)}\to 0\;\;\mbox{as}\;\;n\to\infty.
\end{equation}
    Since $y(T_n;u_n,y_0)=y(T_n-\tau(M);u_n,y(\tau(M);u_n,y_0))$, we have
\begin{eqnarray}\label{p-1-31}
    &\;&y(T_n;u_n,y_0)=e^{\triangle(T_n-\tau(M))}y(\tau(M);u_n,y_0)\nonumber\\
    &\;&+\int_0^{T_n-\tau(M)}e^{\triangle(T_n-\tau(M)-t)}(-f(y(t;u_n,y_0))+\chi_\omega u_n(t))dt.
\end{eqnarray}
    This yields that
\begin{eqnarray}\label{p-1-32}
    &\;&\|y(T_n;u_n,y_0)-y(\tau(M);u_n,y_0)\|_{L^2(\Omega)}\nonumber\\
    &\leq&\left\|\left(e^{\triangle(T_n-\tau(M))}
    -1\right)y(\tau(M);u_n,y_0)\right\|_{L^2(\Omega)}\nonumber\\
    &\;&+\int_0^{T_n-\tau(M)}(\|f(y(t;u_n,y_0))\|_{L^2(\Omega)}
    +\|u_n(t)\|_{L^2(\Omega)})dt\nonumber\\
    &\leq&\left\|\left(e^{\triangle(T_n-\tau(M))}-1\right)(y(\tau(M);u_n,y_0)
    -y(\tau(M);\tilde{u},y_0))
    \right\|_{L^2(\Omega)}\nonumber\\
    &\;&+\left\|\left(e^{\triangle(T_n-\tau(M))}
    -1\right)y(\tau(M);\tilde{u},y_0)\right\|_{L^2(\Omega)}\nonumber\\
    &\;&+\int_0^{T_n-\tau(M)}\left(M
    +L\|y(t;\tilde{u},y_0)\|_{L^2(\Omega)}\right)dt\nonumber\\
    &\equiv&I_n^1+I_n^2+I_n^3.
\end{eqnarray}
    From (\ref{p-1-11}), it is clear that
\begin{equation}\label{p-1-33}
    I_n^1\leq 2\|y(\tau(M);u_n,y_0)-y(\tau(M);\tilde{u},y_0)\|_{L^2(\Omega)}\to 0\;\;\mbox{as}\;\;n\to\infty.
\end{equation}
    On the other hand, we note that $z(t)=e^{\triangle t}y(\tau(M);\tilde{u},y_0)$ is the solution of the following equation
\begin{equation}\label{yu-r-2-1}
\begin{cases}
    z_t-\triangle z=0&\mbox{in}\;\;\Omega\times(0,\delta),\\
    z=0&\mbox{on}\;\;\partial\Omega\times(0,\delta),\\
    z(0)=y(\tau(M);\tilde{u},y_0)&\mbox{in}\;\;\Omega.
\end{cases}
\end{equation}
    From the continuity of the solution for the equation (\ref{yu-r-2-1}), we get that
\begin{equation}\label{yu-r-2-2}
    I_n^2=\|z(T_n-\tau(M))-z(0)\|_{L^2(\Omega)}\to 0\;\;\mbox{as}\;\;n\to\infty.
\end{equation}
    Similar to the proof of (\ref{p-1-16}), there exists a constant $C>0$ such that
$$
    \sup_{t\in[0,\tau(M)+\eta]}\|y(t;\tilde{u},y_0)\|_{L^2(\Omega)}\leq C.
$$
    This implies that
\begin{equation}\label{p-1-35}
    I_n^3\leq(M+L C)(T_n-\tau(M))\to 0\;\;\mbox{as}\;\;n\to\infty.
\end{equation}
    Therefore, from (\ref{p-1-33}), (\ref{yu-r-2-2}) and (\ref{p-1-35}), we get
\begin{equation}\label{p-1-36}
    \|y(T_n;u_n,y_0)-y(\tau(M);u_n,y_0)\|_{L^2(\Omega)}\to 0\;\;\mbox{as}\;\;n\to\infty.
\end{equation}
    This, together with (\ref{p-1-11}), yields (\ref{p-1-10}).
    By (\ref{p-1-10}) and the fact of $y(T_n;u_n,y_0)\in B(0,r)$, we have
\begin{equation}\label{p-1-37}
    y(\tau(M);\tilde{u},y_0)\in B(0,r).
\end{equation}
    Let
$$
    u^*_M=
\begin{cases}
    \tilde{u}&\mbox{in}\;[0,\tau(M)),\\
    0&\mbox{in}\;[\tau(M),+\infty),
\end{cases}
    $$
    we have (\ref{p-1-7}) and (\ref{p-1-8}). The proof is completed.
\end{proof}
    Next, we prove the existence of  a solution of the problem $(NP)_T$.
\begin{lemma}\label{p-lemma-3}
    Suppose that $(H_1)$ and $(H_2)$ hold. Then, for each $T>0$, the problem $(NP)_T$ has a control $v^*_T\in L^\infty(0,T;L^2(\Omega))$ such that
\begin{equation}\label{p-1-37}
    \|v^*_T\|_{L^\infty(0,T;L^2(\Omega))}\leq\alpha(T)
\end{equation}
    and
\begin{equation}\label{p-1-38}
    y(T;v^*_T,y_0)\in B(0,r).
\end{equation}
\end{lemma}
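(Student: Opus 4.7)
My plan is to follow the direct method of the calculus of variations, paralleling the strategy of Lemma \ref{p-lemma-2}. First I would choose a minimizing sequence $\{v_n\}_{n\in\mathbb{N}} \subset \mathcal{V}_T$ with $\|v_n\|_{L^\infty(0,T;L^2(\Omega))} \to \alpha(T)$. Since this sequence is bounded in $L^\infty(0,T;L^2(\Omega))$, the Banach--Alaoglu theorem yields a subsequence (still labeled $v_n$) and $v^*_T \in L^\infty(0,T;L^2(\Omega))$ such that $v_n \to v^*_T$ weakly-$*$ in $L^\infty(0,T;L^2(\Omega))$. By weak-$*$ lower semicontinuity of the norm, this immediately gives (\ref{p-1-37}). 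The crucial step is (\ref{p-1-38}): I need to show $y(T;v^*_T,y_0) \in B(0,r)$.

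Setting $y_n(t) \equiv y(t;v_n,y_0)$, I would establish uniform bounds on $\{y_n\}$ in $C([0,T];L^2(\Omega)) \cap L^2(0,T;H_0^1(\Omega))$ and on $\{(y_n)_t\}$ in $L^2(0,T;H^{-1}(\Omega))$ by multiplying the equation (\ref{e-1-2}) by $y_n$, invoking $(H_1)$ and Gronwall's inequality, exactly as in (\ref{p-1-12})--(\ref{p-1-20}). Aubin's compactness theorem then yields a further subsequence with $y_n \to \bar y$ strongly in $L^2(0,T;L^2(\Omega))$, weakly in $L^2(0,T;H_0^1(\Omega))$, and $(y_n)_t \to \bar y_t$ weakly in $L^2(0,T;H^{-1}(\Omega))$, as in (\ref{p-1-21}).

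The strong $L^2$-convergence is what allows me to pass to the limit in the nonlinear term, since $\|f(y_n)-f(\bar y)\|_{L^2(0,T;L^2(\Omega))}\le L\|y_n-\bar y\|_{L^2(0,T;L^2(\Omega))}$ by $(H_1)$. Combined with the weak convergence of $v_n$ to $v^*_T$ and of $\triangle y_n$ to $\triangle\bar y$ in $L^2(0,T;H^{-1}(\Omega))$, testing against $\varphi\in L^2(0,T;H_0^1(\Omega))$ as in (\ref{p-1-24})--(\ref{p-1-25}) identifies $\bar y = y(\cdot;v^*_T,y_0)$. To upgrade to convergence at the terminal time, I would set $z_n \equiv y_n - y(\cdot;v^*_T,y_0)$ and repeat the energy estimate (\ref{p-1-26})--(\ref{p-1-29}) to conclude $y_n \to y(\cdot;v^*_T,y_0)$ in $C([0,T];L^2(\Omega))$. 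Since $y(T;v_n,y_0) \in B(0,r)$ and $B(0,r)$ is closed in $L^2(\Omega)$, this yields (\ref{p-1-38}).

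The main obstacle, as in Lemma \ref{p-lemma-2}, is handling the semilinear term $f(y_n)$: the weak-$*$ convergence of the controls alone does not allow one to pass to the limit in $f(y_n)$, and this forces the Aubin--Lions compactness argument to obtain the strong convergence required to exploit the Lipschitz bound on $f$. In contrast to Lemma \ref{p-lemma-2}, here the terminal time $T$ is fixed, so the extra step (\ref{p-1-30})--(\ref{p-1-36}) controlling the shift $T_n - \tau(M)$ is not needed and the argument ends one step earlier.
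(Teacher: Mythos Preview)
Your proposal is correct and follows exactly the paper's approach: the paper's proof takes a minimizing sequence, extracts a weak-$*$ convergent subsequence, invokes the same compactness argument as in (\ref{p-1-11}) of Lemma \ref{p-lemma-2} to get $y(T;v_n,y_0)\to y(T;\bar v,y_0)$ in $L^2(\Omega)$, and concludes by closedness of $B(0,r)$ and weak-$*$ lower semicontinuity of the norm. Your observation that the time-shift step (\ref{p-1-30})--(\ref{p-1-36}) is unnecessary here because $T$ is fixed is precisely why the paper's proof is shorter; the only item you might add is the one-line remark (via approximate controllability) that $\mathcal{V}_T\neq\emptyset$, which the paper restates at the outset.
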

\begin{proof}
    By the approximate controllability of Equation (\ref{e-1-1}) (see Theorem 1.4 in \cite{b2}), we know that for any $T>0$, there exists a control $v\in L^\infty(0,T;L^2(\Omega))$ such that
\begin{equation}\label{p-1-39}
    y(T;v,y_0)\in B(0,r).
\end{equation}
    Then the set $\mathcal{V}_T$ is nonempty. Let $\{v_n\}_{n\in\mathbb{N}}\subset \mathcal{V}_T$ such that
\begin{equation}\label{p-1-40}
    \|v_n\|_{L^\infty(0,T;L^2(\Omega))}\to\alpha(T)\;\;\mbox{as}\;\;n\to\infty.
\end{equation}
    It follows that $\{v_n\}_{n\in\mathbb{N}}$ is bounded in $L^\infty(0,T;L^2(\Omega))$. Therefore, there exists a subsequence of $\{v_n\}_{n\in\mathbb{N}}$, still denoted in the same way, and $\bar{v}\in L^\infty(0,T;L^2(\Omega))$ such that
\begin{equation}\label{p-1-41}
    v_n\to \bar{v}\;\;\mbox{weakly star in}\;\; L^\infty(0,T;L^2(\Omega))\;\;\mbox{as}\;\;n\to\infty.
\end{equation}
    Similar to the proof of (\ref{p-1-11}) in Lemma \ref{p-lemma-2}, we have that, there exists a subsequence of $\{v_n\}_{n\in\mathbb{N}}$, still denoted in the same way, such that
\begin{equation}\label{p-1-42}
    \|y(T;v_n,y_0)-y(T;\bar{v},y_0)\|_{L^2(\Omega)}\to 0\;\;\mbox{as}\;\;n\to\infty.
\end{equation}
    This implies that
\begin{equation}\label{p-1-43}
    y(T;\bar{v},y_0)\in B(0,r).
\end{equation}
    Then  $\bar{v}\in \mathcal{V}_T$. From the weakly star lower semi-continuity of $L^\infty$-norm and (\ref{p-1-40}), we get
\begin{equation}\label{p-1-44}
    \|\bar{v}\|_{L^\infty(0,T;L^2(\Omega))}\leq \liminf_{n\to\infty}\|v_n\|_{L^\infty(0,T;L^2(\Omega))}=\alpha(T).
\end{equation}
    Let $v^*_T=\bar{v}$, from (\ref{p-1-43}) and (\ref{p-1-44}), we complete the proof.
\end{proof}
    The following proposition contains the maximum principle for the problem $(TP)_M$. The proof of this proposition has been presented in Theorem 4.1 of Chapter 7 in \cite{b1} (see also the proof of Theorem 1 in \cite{b6}). Then, we omit it in our paper.
\begin{proposition}\label{p-proposition-1}
    Assume that $(H_1)$ and $(H_2)$ hold. For each $M>0$, let $\tau(M)$ be the optimal time and $u^*_M$ be the optimal control for the problem $(TP)_M$. Then there exists $(\xi,\psi)\in L^2(\Omega)\times C([0,\tau(M)];L^2(\Omega))$ satisfied
\begin{equation}\label{p-proposition1-1}
    \xi\neq 0
\end{equation}
    and
\begin{equation}\label{p-proposition1-2}
\begin{cases}
    \psi_t+\triangle\psi-f'(y(\cdot;u^*,y_0))\psi=0&\mbox{in}\;\;\Omega\times(0,\tau(M)),\\
    \psi=0&\mbox{on}\;\;\partial\Omega\times(0,\tau(M)),\\
    \psi(\tau(M))=\xi&\mbox{in}\;\;\Omega
\end{cases}
\end{equation}
    such that
\begin{equation}\label{p-proposition1-3}
    \langle\chi_\omega\psi(t),u^*_M(t)\rangle_{L^2(\Omega)}=\max_{\|v\|_{L^2(\Omega)}\leq M}\langle
    \chi_\omega\psi(t),v\rangle_{L^2(\Omega)}\;\;\mbox{for almost all}\;\;t\in(0,\tau(M)).
\end{equation}
\end{proposition}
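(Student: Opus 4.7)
The plan is to follow the classical Pontryagin-type argument adapted to the semilinear setting, namely: separate the reachable set from the target ball, linearize the state equation around the optimum, and identify the separating functional as the terminal value of the adjoint state. First I would fix $u^*_M$ and $\tau(M)$ and observe the geometric optimality fact: for every $\varepsilon\in(0,\tau(M))$ the reachable set
\[
\mathcal R(\tau(M)-\varepsilon)=\{y(\tau(M)-\varepsilon;u,y_0):u\in L^\infty(0,\tau(M);L^2(\Omega)),\ \|u(t)\|_{L^2(\Omega)}\le M\text{ a.e.}\}
\]
is disjoint from the open ball $\operatorname{int}B(0,r)$, while $y(\tau(M);u^*_M,y_0)\in\bar B(0,r)$. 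This is what forces a nontrivial separating direction at the terminal time.

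Next I would pass to the linearization. For an arbitrary admissible control $v$ and $\lambda\in(0,1]$, set $u_\lambda=(1-\lambda)u^*_M+\lambda v$ and study the variation $z_\lambda=\lambda^{-1}(y(\cdot;u_\lambda,y_0)-y(\cdot;u^*_M,y_0))$. Using the hypothesis $|f'|\le L$ and the same Gronwall/energy estimates as in Lemma \ref{p-lemma-2}, $z_\lambda$ converges in $C([0,\tau(M)];L^2(\Omega))$ to the solution $z$ of the linearized state equation
\begin{equation*}
\begin{cases}
z_t-\triangle z+f'(y(\cdot;u^*_M,y_0))z=\chi_\omega(v-u^*_M)&\text{in }\Omega\times(0,\tau(M)),\\
z=0&\text{on }\partial\Omega\times(0,\tau(M)),\\
z(0)=0&\text{in }\Omega.
\end{cases}
\end{equation*}
The set $\mathcal K=\{z(\tau(M)):v\text{ admissible}\}$ is convex (since the constraint is convex and the equation is linear), and by the geometric optimality above together with a spike/needle argument it cannot intersect any translate of an open ball centered at $0$ of radius smaller than $r$ once shifted by $y(\tau(M);u^*_M,y_0)$; more precisely, $0$ is not in the interior of $\mathcal K-\bigl(y(\tau(M);u^*_M,y_0)-\bar B(0,r)\bigr)$. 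A Hahn–Banach separation in $L^2(\Omega)$ then produces a nonzero $\xi\in L^2(\Omega)$ with
\[
\langle\xi,z(\tau(M))\rangle_{L^2(\Omega)}\le 0\quad\text{for every admissible }v.
\]

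Finally I would define $\psi$ as the backward solution of (\ref{p-proposition1-2}) with terminal datum $\xi$; since $|f'|\le L$ this linear backward problem is well posed in $C([0,\tau(M)];L^2(\Omega))$, and the condition $\xi\neq 0$ yields (\ref{p-proposition1-1}). Multiplying the linearized equation by $\psi$ and integrating by parts on $(0,\tau(M))\times\Omega$ gives the duality identity
\[
\langle\xi,z(\tau(M))\rangle_{L^2(\Omega)}=\int_0^{\tau(M)}\langle\chi_\omega\psi(t),v(t)-u^*_M(t)\rangle_{L^2(\Omega)}\,dt,
\]
so the separation inequality becomes $\int_0^{\tau(M)}\langle\chi_\omega\psi(t),v(t)-u^*_M(t)\rangle_{L^2(\Omega)}\,dt\le 0$ for every $v$ with $\|v(t)\|_{L^2(\Omega)}\le M$ a.e. A standard Lebesgue-point/localization argument, obtained by choosing $v$ equal to $u^*_M$ outside an arbitrarily small interval around a point $t_0$ and equal to an arbitrary $w\in\bar B(0,M)\subset L^2(\Omega)$ on that interval, upgrades this integral inequality to the pointwise maximum condition (\ref{p-proposition1-3}) at almost every $t\in(0,\tau(M))$.

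The main obstacle is the separation step: because of the nonlinearity $f(y)$ one cannot directly invoke a convex analysis argument on the original reachable set. The cleanest remedy is either to apply Ekeland's variational principle to the penalized functional $\operatorname{dist}_{L^2(\Omega)}(y(\tau(M);u,y_0),B(0,r))$ on the admissible set (which converts the implicit time-optimality constraint into an explicit approximate minimization and bypasses the need for an interior), or to work directly with the linearization $\mathcal K$ above; in either case, the technical heart is showing that the resulting multiplier $\xi$ is nonzero, which is where the condition that the optimal trajectory first touches $B(0,r)$ precisely at $t=\tau(M)$ — and not earlier — is crucially used.
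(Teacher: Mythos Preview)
The paper does not actually prove this proposition; it simply states that ``the proof of this proposition has been presented in Theorem 4.1 of Chapter 7 in \cite{b1} (see also the proof of Theorem 1 in \cite{b6})'' and omits it. Your outline --- linearization around the optimal trajectory, separation in $L^2(\Omega)$ producing a nonzero multiplier $\xi$, construction of the adjoint $\psi$ and the duality identity, and a Lebesgue-point localization to pass from the integral inequality to the pointwise maximum condition --- is precisely the Pontryagin-type argument carried out in those references, including the use of Ekeland's variational principle to overcome the lack of convexity of the nonlinear reachable set. So your approach is correct and coincides with the one the paper defers to.

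One small caution: your separation paragraph is the least precise part. The set $\mathcal K$ you write down is the cone of linearized variations based at $y(\tau(M);u^*_M,y_0)$, but the geometric information you have is about the \emph{nonlinear} reachable set at times $\tau(M)-\varepsilon$, not directly about $\mathcal K$; connecting the two rigorously is exactly where the needle/spike variation (or Ekeland's principle) enters, and this is also where one uses $y(\tau(M);u^*_M,y_0)\in\partial B(0,r)$ to guarantee $\xi\neq 0$. If you write this up in full, make that bridge explicit rather than leaving it at the level of ``a spike/needle argument''.
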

    From this proposition, we can deduce the Bang-Bang property for the optimal control of the problem $(TP)_M$.
\begin{corollary}\label{p-corollary-1}
    Assume that $(H_1)$ and $(H_2)$ hold. For each $M>0$, if $\tau(M)$ and $u^*_M$ are the optimal time and optimal control to the problem $(TP)_M$, respectively. Then
\begin{equation}\label{p-corollary1-1}
    \|u^*_M(t)\|_{L^2(\Omega)}=M\;\;\mbox{for almost all}\;\;t\in(0,\tau(M)).
\end{equation}
\end{corollary}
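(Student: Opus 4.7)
My plan is to combine the Pontryagin-type maximum principle of Proposition \ref{p-proposition-1} with a unique continuation argument for the adjoint parabolic equation. Proposition \ref{p-proposition-1} supplies an adjoint state $\psi$ with terminal datum $\xi \neq 0$ such that the pointwise maximum condition (\ref{p-proposition1-3}) holds.

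The first step is the elementary observation that for any $\varphi \in L^2(\Omega)$,
$$\max_{\|v\|_{L^2(\Omega)} \leq M} \langle \varphi, v\rangle_{L^2(\Omega)} = M\|\varphi\|_{L^2(\Omega)},$$
and when $\varphi \neq 0$ the unique maximizer is $M\varphi/\|\varphi\|_{L^2(\Omega)}$, which has $L^2(\Omega)$-norm exactly $M$. Applied with $\varphi = \chi_\omega \psi(t)$, (\ref{p-proposition1-3}) yields
$$\langle \chi_\omega \psi(t), u^*_M(t)\rangle_{L^2(\Omega)} = M\,\|\chi_\omega\psi(t)\|_{L^2(\Omega)} \quad \text{for a.e. } t \in (0, \tau(M)).$$
For every such $t$ at which $\chi_\omega\psi(t) \neq 0$, equality in Cauchy-Schwarz together with the constraint $\|u^*_M(t)\|_{L^2(\Omega)} \leq M$ forces $\|u^*_M(t)\|_{L^2(\Omega)} = M$.

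The core of the argument is therefore to show that the bad set
$$E \equiv \{t \in (0,\tau(M)) : \chi_\omega \psi(t) = 0\}$$
has Lebesgue measure zero. I would argue by contradiction: assume $|E|>0$. By $(H_1)$, the potential $f'(y(\cdot;u^*_M,y_0))$ in the adjoint equation (\ref{p-proposition1-2}) is bounded by $L$, so after time reversal $\psi$ solves a forward linear heat equation with a bounded $L^\infty$ potential. Such solutions enjoy both (i) time-analytic regularity away from the endpoints and (ii) the spatial unique continuation property of Saut-Scheurer / Fern\'andez-Cara-Zuazua type: if a solution vanishes on $\omega\times J$ for some open time interval $J$, it vanishes on $\Omega\times J$. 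Using (i), the vanishing of $\chi_\omega\psi(t)$ on the positive-measure set $E$ propagates to all of $(0,\tau(M))$; then (ii) gives $\psi \equiv 0$ on $\Omega\times(0,\tau(M))$, whence $\xi = \psi(\tau(M)) = 0$, contradicting (\ref{p-proposition1-1}).

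The main obstacle is this unique continuation step. In the linear heat equation case it is classical; what makes the argument go through in the present semilinear setting is that the Pontryagin principle in Proposition \ref{p-proposition-1} already provides a \emph{linear} adjoint equation along the optimal trajectory, with a merely $L^\infty$ potential guaranteed by $(H_1)$, which is exactly the regularity the parabolic unique continuation machinery requires. Once $|E|=0$ is established, the conclusion of the previous paragraph gives $\|u^*_M(t)\|_{L^2(\Omega)} = M$ for almost every $t \in (0,\tau(M))$, which is precisely (\ref{p-corollary1-1}).
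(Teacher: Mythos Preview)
Your overall architecture matches the paper's: use the maximum condition (\ref{p-proposition1-3}) to reduce the bang-bang property to the claim that $\chi_\omega\psi(t)\neq 0$ for (almost) every $t\in(0,\tau(M))$, and obtain the latter from a unique continuation property of the adjoint equation. The paper in fact asserts the stronger pointwise statement (\ref{p-2-63}), namely $\chi_\omega\psi(t)\neq 0$ for \emph{every} $t$, by directly invoking the unique continuation results of \cite{b4} and \cite{b5} for linear parabolic equations with bounded potential; the remainder of the paper's argument is then the same Cauchy--Schwarz computation you give.

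The genuine gap in your proposal is step (i), the appeal to time-analyticity. The potential in (\ref{p-proposition1-2}) is $a(x,t)=f'\bigl(y(t;u^*_M,y_0)\bigr)$. Since $u^*_M$ is only in $L^\infty(\mathbb{R}^+;L^2(\Omega))$, the optimal trajectory $t\mapsto y(t;u^*_M,y_0)$ is merely continuous in $t$, so $a$ is in general no better than $L^\infty$ in time. Solutions of a linear parabolic equation with a time-dependent, merely bounded potential are \emph{not} analytic in $t$; the analyticity of the heat semigroup delivers time-analyticity of solutions only for autonomous (or time-analytic) coefficients. Consequently you cannot propagate vanishing of $\chi_\omega\psi$ from a positive-measure set $E$ to an open time interval by analyticity, and your step (ii) never gets off the ground.

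What replaces this, and what the paper quotes, is exactly the content of \cite{b4,b5}: an observability/unique continuation estimate for linear parabolic equations with $L^\infty$ potential which gives, in effect, $\|\psi(\tau(M))\|_{L^2(\Omega)}\le C\,\|\chi_\omega\psi(t)\|_{L^2(\Omega)}$ for each $t\in(0,\tau(M))$. This is the nontrivial analytic input that substitutes for your time-analyticity step, and the bound $|f'|\le L$ from $(H_1)$ is precisely what places the adjoint equation within the scope of those results.
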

\begin{proof}
    Before to prove (\ref{p-corollary1-1}), we first recall that if $\xi\neq 0$, then
\begin{equation}\label{p-2-63}
    \|\chi_\omega\psi(t)\|_{L^2(\Omega)}\neq 0\;\;\mbox{for all}\;\;t\in(0,\tau(M)),
\end{equation}
    where $\psi$ is the unique solution to Equation (\ref{p-proposition1-2}). When the domain $\Omega$ is a convex subset of $\mathbb{R}^N$, this result which is called the property of unique continuation for the semilinear heat equation was proved in \cite{b4} (see Proposition 2.1 in \cite{b4}). Recently, in \cite{b5}, the authors have proved that, indeed, the assumption of convexity for $\Omega$ can be removed (see Theorem 4 in \cite{b5}). In fact, if (\ref{p-2-63}) does not hold, then by the unique continuation for the semilinear heat equation, we know that $\psi(\tau(M))=0$. It contradicts (\ref{p-proposition1-1}). Hence (\ref{p-2-63}) holds.
\par
    Now, we shall prove (\ref{p-corollary1-1}). Suppose that (\ref{p-corollary1-1}) did not hold. Then there exists a measurable set $e\subset(0,\tau(M))$ with $\mbox{mes}(e)>0$ such that
\begin{equation}\label{p-2-64}
    \|u^*_M(t)\|_{L^2(\Omega)}<M\;\;\mbox{for all}\;\;t\in e,
\end{equation}
    where $\mbox{mes}(A)$ is the Lebesgue measure of $A\subset \mathbb{R}$. However, from (\ref{p-proposition1-3}), we have that
\begin{equation}\label{p-2-65}
    M\|\chi_\omega \psi(t)\|_{L^2(\Omega)}=\langle \chi_\omega\psi(t),u^*_M(t)\rangle_{L^2(\Omega)}\;\;\mbox{for almost all}\;\;t\in(0,\tau(M)).
\end{equation}
    This gives that
\begin{equation}\label{p-2-66}
    M\|\chi_\omega \psi(t)\|_{L^2(\Omega)}\leq\| \chi_\omega\psi(t)\|_{L^2(\Omega)}\|u^*_M(t)\|_{L^2(\Omega)}\;\;\mbox{for almost all}\;\;t\in(0,\tau(M)).
\end{equation}
    Then, by (\ref{p-2-63}), we get
\begin{equation}\label{p-2-67}
    \|u^*_M(t)\|\geq M\;\;\mbox{for almost all}\;\;t\in e.
\end{equation}
    It contradicts the assumption (\ref{p-2-64}). The proof is completed.
\end{proof}
\section{The proof of the main result}
    In this section, we shall present the proof of Theorem \ref{main-theorem}. For this purpose, we first prove the following theorem (i.e., Theorem \ref{m-theorem1}). This theorem states that
    the minimal time as a function of the norm bound $M$ and the minimal control norm as a function of $T$ are continuous, and inverse to each other.
\begin{theorem}\label{m-theorem1}
    Let $\gamma(y_0)=\inf\{t>0;y(t;0,y_0)\in B(0,r)\}$. Then, under the assumptions $(H_1)$ and $(H_2)$, the function $\tau(\cdot)$ is strictly monotonically decreasing and continuous from $[0,+\infty)$ onto $(0,\gamma(y_0)]$. Moreover, it holds that
\begin{equation}\label{m-theorem-1}
    \tau(\alpha(T))=T\;\;\mbox{for all}\;\;T\in(0,\gamma(y_0)]\;\;\mbox{and}\;\;
    \alpha(\tau(M))=M\;\;\mbox{for all}\;\;M\in[0,+\infty).
\end{equation}
    Consequently, the maps $M\to\tau(M)$ and $T\to\alpha(T)$ are the inverse of each other.
\end{theorem}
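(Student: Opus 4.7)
The plan is to establish $\tau$ as a strictly decreasing continuous bijection $[0,+\infty)\to(0,\gamma(y_0)]$ and then deduce the inverse identities.

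I would first settle the monotonicity and the two limiting values. Monotonicity $\tau(M_2)\leq\tau(M_1)$ for $M_1\leq M_2$ is immediate from $\mathcal{U}_{M_1}\subset\mathcal{U}_{M_2}$. Strictness is the first place Corollary \ref{p-corollary-1} enters: if $M_1<M_2$ and $\tau(M_1)=\tau(M_2)$, then $u^*_{M_1}$ is also optimal for $(TP)_{M_2}$, so the Bang-Bang property forces $\|u^*_{M_1}(t)\|_{L^2(\Omega)}=M_2$ a.e., contradicting $\|u^*_{M_1}(t)\|_{L^2(\Omega)}\leq M_1$. The endpoint $\tau(0)=\gamma(y_0)$ is just the definitions, and $\tau(M)\to 0$ as $M\to\infty$ follows from approximate controllability: for any $\epsilon>0$, a control $v\in\mathcal{V}_\epsilon$ extended by $0$ yields an element of $\mathcal{U}_{\|v\|_{L^\infty(0,\epsilon;L^2(\Omega))}}$ with hitting time $\leq\epsilon$, so $\tau(M)\leq\epsilon$ once $M$ is large enough.

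The main obstacle is continuity of $\tau$ at an arbitrary $M_0\geq 0$. For upper semicontinuity at $M_0$ (the $M_n\searrow M_0$ side), I would take optimal controls $u^*_{M_n}$ with times $\tau(M_n)\nearrow T_*\leq\tau(M_0)$ and replay verbatim the compactness machinery of Lemma \ref{p-lemma-2}: weak-$\star$ extraction of a limit $\bar u$ in $L^\infty$, Aubin's theorem to promote the states to strong convergence, passage to the limit in the weak formulation to identify $\bar y=y(\cdot;\bar u,y_0)$, and the chain of estimates (\ref{p-1-32})--(\ref{p-1-36}) to obtain $y(T_*;\bar u,y_0)\in B(0,r)$; weak-$\star$ lower semicontinuity of the norm places $\bar u\in\mathcal{U}_{M_0}$, hence $\tau(M_0)\leq T_*$. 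For the opposite direction (crucial at $M_0>0$), I would use a scaling trick: set $u_M:=(M/M_0)u^*_{M_0}$ on $[0,\tau(M_0)]$ and extend by $0$. Applying the decay estimate (\ref{p-1-2}) on $[\tau(M_0),\tau(M_0)+\delta]$ pushes the state of $u^*_{M_0}$ strictly inside $B(0,r)$, and an energy estimate of the form (\ref{p-1-28}) transfers this strict interiority to $u_M$ for $M$ close to $M_0$, giving $\tau(M)\leq\tau(M_0)+\delta$; sending $\delta\to 0$ completes the argument. Continuity at $M_0=0$ is the same compactness argument with limit control identically zero.

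With $\tau:[0,+\infty)\to(0,\gamma(y_0)]$ a continuous, strictly decreasing surjection, the inverse identities follow by extension/restriction combined with strict monotonicity. The easy halves $\tau(\alpha(T))\leq T$ and $\alpha(\tau(M))\leq M$ come respectively from extending $v^*_T$ by $0$ (the zero tail keeps the state inside $B(0,r)$ by (\ref{p-1-2}), using $f(y)y\geq 0$) and restricting $u^*_M$ to $(0,\tau(M))$. For the matching reverse inequalities I would argue by contradiction: if $\tau(\alpha(T))<T$, left-continuity of $\tau$ at $\alpha(T)$ produces some $M'<\alpha(T)$ with $\tau(M')<T$, and extending $u^*_{M'}$ by $0$ places it in $\mathcal{V}_T$ with norm $\leq M'<\alpha(T)$, contradicting the definition of $\alpha(T)$; the identity $\alpha(\tau(M))=M$ is handled symmetrically via strict monotonicity, and the case $M=0$ reduces to $\alpha(\gamma(y_0))=0$, which holds since $0\in\mathcal{V}_{\gamma(y_0)}$.
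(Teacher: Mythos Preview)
Your proposal is correct and follows essentially the same route as the paper: strict monotonicity via the Bang-Bang property of Corollary~\ref{p-corollary-1}, right-continuity via the Aubin compactness argument from Lemma~\ref{p-lemma-2}, left-continuity via the scaling $u_M=(M/M_0)u^*_{M_0}$ combined with the decay estimate~(\ref{p-1-2}), the endpoint limits via approximate controllability and the compactness argument with zero limit control, and the inverse identities by extension/restriction plus a contradiction using the strict monotonicity of $\tau$. The only cosmetic difference is that in the contradiction for $\tau(\alpha(T))=T$ the paper picks $\widetilde{M}<\alpha(T)$ with $\tau(\widetilde{M})=T$ exactly (via the intermediate value theorem) rather than your $M'$ with $\tau(M')<T$, and the paper derives $\alpha(\tau(M))=M$ directly from the first identity and injectivity of $\tau$ rather than by a symmetric contradiction.
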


\begin{proof}
    We follow the idea of \cite{b3}. The proof shall be divided into several steps as follows:
\vskip 8pt
    \emph{Step 1. The function $\tau(\cdot)$ is strictly monotonically decreasing over $[0,+\infty)$.}
\vskip 5pt
    Let $M_1>M_2\geq0$, we shall show that $\tau(M_1)<\tau(M_2)$. Suppose that $\tau(M_1)\geq \tau(M_2)$.  We will find a contradiction. From (\ref{p-1-2}) and Lemma \ref{p-lemma-2}, we know that there exists a time optimal control $u^*_{M_2}$ to Problem $(TP)_{M_2}$ such that
\begin{equation}\label{m-1-1}
    \|\chi_{(0,\tau(M_2))}u^*_{M_2}\|_{L^\infty(\mathbb{R}^+;L^2(\Omega))}\leq M_2<M_1
\end{equation}
    and
\begin{eqnarray}\label{m-1-2}
    &\;&\|y(\tau(M_1);\chi_{(0,\tau(M_2))}u_{M_2}^*,y_0)\|_{L^2(\Omega)}\nonumber\\
    &=&\|y(\tau(M_1)-\tau(M_2);0,y(\tau(M_2);u_{M_2}^*,y_0))\|_{L^2(\Omega)}\nonumber\\
    &\leq&e^{-\lambda_1(\tau(M_1)-\tau(M_2))}\|y(\tau(M_2);u_{M_2}^*,y_0)\|_{L^2(\Omega)}\leq r.
\end{eqnarray}
    These imply that $\chi_{(0,\tau(M_2))}u^*_{M_2}$ is an optimal control to Problem $(TP)_{M_1}$. By Corollary \ref{p-corollary-1} (the Bang-Bang property for the optimal control to Problem $(TP)_{M_1}$), we have
\begin{equation}\label{m-1-3}
    \|\chi_{(0,\tau(M_2))}(t)u^*_{M_2}(t)\|_{L^2(\Omega)}=M_1\;\;\mbox{over}\;\;(0,\tau(M_1)).
\end{equation}
    It contradicts to (\ref{m-1-1}). Then $\tau(M_1)<\tau(M_2)$.
\vskip 8pt
    \emph{Step 2. When $\{M_n\}_{n\in\mathbb{N}}$ is such that $M_1\geq M_2\geq \cdots \geq M_n\to M\in [0,+\infty)$ as $n\to\infty$, then $\lim_{n\to \infty}\tau(M_n)=\tau(M)$. Consequently, the function $\tau(\cdot)$ is right-continuous in $\mathbb{R}^+$.}
\vskip 5pt
    First, we note that from the conclusion of Step 1,
\begin{equation}\label{m-1-4}
    \tau(M_1)\leq\tau(M_2)\leq\cdots\leq\tau(M_n)\leq\cdots\leq \tau(M).
\end{equation}
    If
\begin{equation}\label{m-1-5}
    \lim_{n\to\infty}\tau(M_n)\neq\tau(M),
\end{equation}
     we have
\begin{equation}\label{m-1-6}
    \tau(M_n)\nearrow(\tau(M)-\delta)\;\;\mbox{for some}\;\;\delta>0\;\;\mbox{as}\;\;n\to\infty.
\end{equation}
    On the other hand, since $u^*_{M_n}$ is the optimal control to Problem $(TP)_{M_n}$, $n\in\mathbb{N}$, we can conclude that
\begin{equation}\label{m-1-7}
    y(\tau(M_n);u_{M_n}^*,y_0)\in B(0,r)
\end{equation}
    and
\begin{equation}\label{m-1-8}
    \|u^*_{M_n}\|_{L^\infty(\mathbb{R}^+;L^2(\Omega))}\leq M_n\leq M_1\;\;\mbox{for all}\;\;n.
\end{equation}
    Thus, there is a subsequence of $\{u_{M_n}^*\}_{n\in\mathbb{N}}$, still denoted in the same way, and $\bar{u}\in L^\infty(\mathbb{R}^+;L^2(\Omega))$ such that
\begin{equation}\label{m-1-9}
    u_{M_n}^*\to\bar{u}\;\;\mbox{weakly star in}\;\;L^\infty(\mathbb{R}^+;L^2(\Omega))\;\;\mbox{as}\;\;n\to\infty.
\end{equation}
    Now, we show that there exists a subsequence of $\{u^*_{M_n}\}_{n\in\mathbb{N}}$, still denoted in the same way, such that
\begin{equation}\label{m-1-10}
    \|y(\tau(M_n);u^*_{M_n},y_0)-y(\tau(M)
    -\delta;\bar{u},y_0)\|_{L^2(\Omega)}\to0\;\;\mbox{as}\;\;n\to\infty.
\end{equation}
    For this purpose, we only need to prove that
\begin{equation}\label{m-1-11}
    \|y(\tau(M_n);u^*_{M_n},y_0)-y(\tau(M_n);\bar{u},y_0)\|_{L^2(\Omega)}\to 0\;\;\mbox{as}\;\;n\to \infty
\end{equation}
    and
\begin{equation}\label{m-1-12}
    \|y(\tau(M_n);\bar{u},y_0)-y(\tau(M)-\delta;\bar{u},y_0)\|_{L^2(\Omega)}\to 0\;\;\mbox{as}\;\;n\to \infty.
\end{equation}
\par
    First, we show that (\ref{m-1-11}) holds.
    Similar to the proofs of (\ref{p-1-18}) and (\ref{p-1-20}), we have
\begin{equation}\label{m-1-13}
    \{y(\cdot;u_{M_n}^*,y_0)\}_{n\in\mathbb{N}}\;\;\mbox{is bounded in}\;\;
    C([0,\tau(M)-\delta];L^2(\Omega))\cap L^2(0,\tau(M)-\delta;H_0^1(\Omega))
\end{equation}
    and
\begin{equation}\label{m-1-14}
    \{y_t(\cdot;u^*_{M_n},y_0)\}_{n\in\mathbb{N}}\;\;\mbox{is bounded in}\;\;L^2(0,\tau(M)-\delta;H^{-1}(\Omega)).
\end{equation}
    Therefore, from Aubin's theorem, there exist a subsequence of $\{y(\cdot;u_{M_n}^*,y_0)\}_{n\in\mathbb{N}}$, still denoted in the same way, and $\tilde{y}$ such that
\begin{equation}\label{m-1-15}
\begin{cases}
    y(\cdot;u^*_{M_n},y_0)\to\tilde{y}\;\;\mbox{strongly in}\;\;L^2(0,\tau(M)-\delta;L^2(\Omega))\;\;\mbox{as}\;\;n\to\infty,\\
    y(\cdot;u^*_{M_n},y_0)\to\tilde{y}\;\;\mbox{weakly in}\;\;L^2(0,\tau(M)-\delta;H_0^1(\Omega))\;\;\mbox{as}\;\;n\to\infty,\\
    y_t(\cdot;u^*_{M_n},y_0)\to\tilde{y}_t\;\;\mbox{weakly in}\;\;L^2(0,\tau(M)-\delta;H^{-1}(\Omega))\;\;
    \mbox{as}\;\;n\to\infty.
\end{cases}
\end{equation}
    Now, we show that $\tilde{y}(\cdot)\equiv y(\cdot;\bar{u},y_0)$. For this purpose, we only need to prove that
\begin{equation}\label{m-1-16}
    \tilde{y}_t-\triangle\tilde{y}+f(\tilde{y})=\chi_\omega\bar{u}\;\;\mbox{in}
    \;\;L^2(0,\tau(M)-\delta;H^{-1}(\Omega)).
\end{equation}
    First, we note from (\ref{m-1-15}) that
\begin{eqnarray}\label{m-1-17}
    &\;&\int_0^{\tau(M)-\delta}\|f(y(t;u^*_{M_n},y_0))-f(\tilde{y}(t))\|_{L^2(\Omega)}^2dt
    \nonumber\\
    &\leq& L^2\int_0^{\tau(M)-\delta}\|y(t;u^*_{M_n},y_0)-\tilde{y}(t)\|_{L^2(\Omega)}^2dt
    \to 0\;\;\mbox{as}\;\;n\to\infty.
\end{eqnarray}
    Then, for any $\varphi\in L^2(0,\tau(M)-\delta;H_0^1(\Omega))$, from (\ref{m-1-15}), we have
\begin{eqnarray}\label{m-1-18}
    0&=&\langle y_t(\cdot;u^*_{M_n},y_0)-\triangle y(\cdot;u^*_{M_n},y_0)
    +f(y(\cdot;u^*_{M_n},y_0))\nonumber\\
    &\;&-\chi_\omega u_{M_n}^*,\varphi\rangle_{L^2(0,\tau(M)-\delta;H^{-1}(\Omega)),
    L^2(0,\tau(M)-\delta;H_0^1(\Omega))}\nonumber\\
    &\to&\langle \tilde{y}_t-\triangle\tilde{y}+f(\tilde{y})-\chi_\omega\bar{u},\varphi\rangle
    _{L^2(0,\tau(M)-\delta;H^{-1}(\Omega)),
    L^2(0,\tau(M)-\delta;H_0^1(\Omega))}\;\;\mbox{as}\;\;n\to\infty.
\end{eqnarray}
    This gives (\ref{m-1-16}). Hence
\begin{equation}\label{m-1-19}
    \tilde{y}(\cdot)\equiv y(\cdot;\bar{u},y_0).
\end{equation}
    Let $z_n(t)\equiv y(t;u_{M_n}^*,y_0)-y(t;\bar{u},y_0)$ for all $t\in(0,\tau(M)-\delta)$, then $z_n(\cdot)$ satisfies that
\begin{equation}\label{m-1-20}
\begin{cases}
    (z_n)_t-\triangle z_n+f(y(\cdot;u_{M_n}^*,y_0))\\
    ~~~~~~~-f(y(\cdot;\bar{u},y_0))
    =\chi_\omega(u^*_{M_n}-\bar{u})&\mbox{in}\;\;\Omega\times(0,\tau(M)-\delta),\\
    z_n=0&\mbox{on}\;\;\partial\Omega\times(0,\tau(M)-\delta),\\
    z_n(0)=0&\mbox{in}\;\;\Omega.
\end{cases}
\end{equation}
    Multiplying this equation by $z_n$ and integrating on $\Omega$, we get
\begin{eqnarray}\label{m-1-21}
    &\;&\frac{1}{2}\frac{d}{dt}\|z_n(t)\|_{L^2(\Omega)}^2+\|\nabla z_n(t)\|_{L^2(\Omega)}^2\nonumber\\
    &=&\langle f(t;\bar{u},y_0)-f(y(t;u^*_{M_n},y_0)),z_n(t)\rangle_{L^2(\Omega)}+\langle\chi_\omega(u^*_{M_n}(t)
    -\bar{u}(t)),z_n(t)\rangle_{L^2(\Omega)}.
\end{eqnarray}
    This, together with $(H_1)$, gives
\begin{eqnarray}\label{m-1-22}
    &\;&\frac{1}{2}\|z_n(t)\|_{L^2(\Omega)}^2+\int_0^t\|\nabla z_n(s)\|_{L^2(\Omega)}^2ds
    \nonumber\\&\leq&
    L\int_0^t\|z_n(s)\|_{L^2(\Omega)}^2ds+\left(\int_0^t\|u^*_{M_n}(s)-\bar{u}(s)\|^2
    _{L^2(\Omega)}ds\right)^{\frac{1}{2}}\left(\int_0^t\|z_n(s)\|^2_{L^2(\Omega)}ds\right)
    ^{\frac{1}{2}}.
\end{eqnarray}
    This, together with (\ref{m-1-15}) and (\ref{m-1-19}), implies
\begin{equation}\label{m-1-23}
    \sup_{t\in[0,\tau(M)-\delta]}\|z_n(t)\|_{L^2(\Omega)}\to 0\;\;\mbox{as}\;\;n\to\infty.
\end{equation}
    From the definition of $z_n$, we get (\ref{m-1-11}).
\par
    On the other hand, from the strong continuity of $y(\cdot;\bar{u},y_0)$ in $L^2(\Omega)$ and (\ref{m-1-6}), (\ref{m-1-12}) is obvious. Hence, together with (\ref{m-1-11}) and (\ref{m-1-12}), (\ref{m-1-10}) holds.
\par
    Since $y(\tau(M_n);u_{M_n}^*,y_0)\in B(0,r)$, from (\ref{m-1-10}), we can conclude that
\begin{equation}\label{m-1-24}
    y(\tau(M)-\delta;\bar{u},y_0)\in B(0,r).
\end{equation}
    On the other hand, from (\ref{m-1-9}), we have
\begin{equation}\label{m-1-25}
    \|\bar{u}\|_{L^\infty(\mathbb{R}^+;L^2(\Omega))}\leq\liminf_{n\to\infty}\|
    u^*_{M_n}\|_{L^\infty(\mathbb{R}^+;L^2(\Omega))}\leq \liminf_{n\to\infty}M_n=M.
\end{equation}
    This implies  $\bar{u}\in\mathcal{U}_M$.
    By (\ref{m-1-24}) and (\ref{m-1-25}), we get a contradiction to the optimality of $\tau(M)$ to Problem $(TP)_M$. Hence $\lim_{n\to\infty}\tau(M_n)=\tau(M)$.
\vskip 8pt
    \emph{Step 3. When $\{M_n\}_{n\in\mathbb{N}}$ is such that $M_1\leq M_2\leq \cdots \leq M_n\to M\in[0,+\infty)$ as $n\to\infty$, it holds that $\lim_{n\to\infty}\tau(M_n)=\tau(M)$. Consequently, the function $\tau(\cdot)$ is left-continuous.}
\vskip 5pt
    From the monotonicity of the function $\tau(\cdot)$ (see the conclusion of Step 1), we have
\begin{equation}\label{m-1-26}
    \tau(M_1)\geq \tau(M_2)\geq\cdots\geq\tau(M_n)\geq\cdots\geq\tau(M).
\end{equation}
    If $\lim_{n\to\infty}\tau(M_n)\neq\tau(M)$, then
\begin{equation}\label{m-1-27}
    \tau(M_n)\searrow(\tau(M)+\delta)\;\;\mbox{for some}\;\;\delta>0\;\;\mbox{as}\;\;n\to\infty
\end{equation}
    and
\begin{equation}\label{m-1-28}
    \tau(M_n)>(\tau(M)+\delta)\;\;\mbox{for all}\;\;n\in\mathbb{N}.
\end{equation}
    Since $\tau(M)$ and $u_M^*$ are the optimal time and optimal control to Problem $(TP)_M$, respectively. Let $y_M(\cdot)\equiv y(\cdot;\chi_{(0,\tau(M))}u^*_M,y_0)$ and $y_n(\cdot)\equiv y(\cdot;\frac{M_n}{M}\chi_{(0,\tau(M))}u^*_M,y_0)$. By the optimality of $u^*_M$ and $\tau(M)$ to Problem $(TP)_M$, we have
\begin{equation}\label{m-1-29}
    y_M(\tau(M))\in B(0,r),\;\;\mbox{i.e.}\;\;\|y_M(\tau(M))\|_{L^2(\Omega)}\leq r.
\end{equation}
    Let $z_n(t)\equiv y_M(t)-y_n(t)$ for each $t\in\mathbb{R}^+$. Then $z_n(\cdot)$ satisfies that
\begin{equation}\label{m-1-30}
\begin{cases}
    (z_n)_t-\triangle z_n+f(y_M)-f(y_n)=\left(1-\frac{M_n}{M}\right)\chi_\omega
    \chi_{(0,\tau(M))}u^*_M&\mbox{in}\;\;
    \Omega\times\mathbb{R}^+,\\
    z_n=0&\mbox{on}\;\;\partial\Omega\times\mathbb{R}^+,\\
    z_n(0)=0&\mbox{in}\;\;\Omega.
\end{cases}
\end{equation}
    Similar to the proof of (\ref{m-1-21}), we have
\begin{eqnarray}\label{m-1-31}
    &\;&\frac{1}{2}\frac{d}{dt}\|z_n(t)\|_{L^2(\Omega)}^2+\|\nabla z_n(t)\|_{L^2(\Omega)}^2+\langle f(y_M(t))-f(y_n(t)),z_n(t)\rangle_{L^2(\Omega)}\nonumber\\
    &=&\left\langle\left(1-\frac{M_n}{M}\right)
    \chi_\omega\chi_{(0,\tau(M))}u^*_M(t),z_n(t)\right\rangle_{L^2(\Omega)}.
\end{eqnarray}
    From this identity and $(H_1)$ with $L>0$, we have the following integral inequality
\begin{equation}\label{m-1-32}
    \|z_n(t)\|_{L^2(\Omega)}^2\leq (2L+1)\int_0^t\|z_n(t)\|_{L^2(\Omega)}^2ds
    +\left(1-\frac{M_n}{M}\right)^2\int_0^t\chi_{(0,\tau(M))}\|u^*_M(s)\|_{L^2(\Omega)}^2ds.
\end{equation}
    This, together with Gronwall's inequality, implies that
\begin{equation}\label{m-1-33}
    \sup_{t\in[0,\tau(M_1)]}\|z_n(t)\|_{L^2(\Omega)}\leq
    \left(1-\frac{M_n}{M}\right)M\sqrt{\tau(M)}
    e^{\frac{(2L+1)\tau(M_1)}{2}}.
\end{equation}
    However, from (\ref{p-1-2}), we have
\begin{equation}\label{m-1-34}
    \|y_M(t)\|_{L^2(\Omega)}\leq e^{-\lambda_1(t-\tau(M))}\|y_M(\tau(M))\|_{L^2(\Omega)}\;\;\mbox{for all}\;\;t\geq\tau(M).
\end{equation}
    This gives that
\begin{eqnarray}\label{m-1-35}
    &\;&\|y_n(\tau(M)+\delta)\|_{L^2(\Omega)}\nonumber\\
    &\leq&\|y_M(\tau(M)+\delta)
    -y_n(\tau(M)+\delta)\|_{L^2(\Omega)}+\|y_M(\tau(M)+\delta)\|_{L^2(\Omega)}\nonumber\\
    &=&\|y_M(\tau(M)+\delta)-y_n(\tau(M)+\delta)\|_{L^2(\Omega)}+\|y(\delta;0,y_M(\tau(M)))\|
    _{L^2(\Omega)}\nonumber\\
    &\leq&\left(1-\frac{M_n}{M}\right)M\sqrt{\tau(M)}e^{\frac{(2L+1)\tau(M_1)}{2}}
    +e^{-\lambda_1\delta}r.
\end{eqnarray}
    Since $M_n\nearrow M$, i.e. $\frac{M_n}{M}\nearrow 1$ as $n\to\infty$, we can find an $n_0\in\mathbb{N}$ such that
\begin{equation}\label{m-1-36}
    \frac{M_n}{M}\geq 1-\frac{(1-e^{-\lambda_1\delta})r}{M\sqrt{\tau(M)}}e^{-\frac{
    (2L+1)\tau(M_1)}{2}}\;\;\mbox{for all}\;\;n\geq n_0.
\end{equation}
    Hence, from (\ref{m-1-35}), we get
\begin{equation}\label{m-1-37}
    \|y_n(\tau(M)+\delta)\|_{L^2(\Omega)}\leq r,\;\;\mbox{i.e.}\;\;y_n(\tau(M)+\delta)\in B(0,r)\;\;\mbox{for all}\;\;n\geq n_0.
\end{equation}
    This implies
\begin{equation}\label{m-1-38}
    \tau(M_n)\leq\tau(M)+\delta\;\;\mbox{for all}\;\;n\geq n_0.
\end{equation}
    It contradicts to (\ref{m-1-28}). This gives the conclusion of Step 3.
\vskip 8pt
    \emph{Step 4. $\lim_{M\to0}\tau(M)=\gamma(y_0)$.}
\vskip 5pt
    If it did not hold, then there exists a subsequence $\{M_n\}_{n\in\mathbb{N}}$ with $M_1\geq M_2\geq \cdots\geq M_n\to 0$ as $n\to \infty$, such that
\begin{equation}\label{m-1-39}
    \lim_{n\to\infty}\tau(M_n)=T<\gamma(y_0).
\end{equation}
    From the optimality of $u^*_{M_n}$ to the problem $(TP)_{M_n}$, we have
\begin{equation}\label{m-1-40}
    \|u_{M_n}^*\|_{L^\infty(\mathbb{R}^+;L^2(\Omega))}\leq M_n\leq M_1\;\;\mbox{for all}\;\;n\in\mathbb{N}.
\end{equation}
    Thus, there exist a subsequence of $\{u_{M_n}^*\}_{n\in\mathbb{N}}$, still denoted  in the same way, and $\tilde{u}$ such that
\begin{equation}\label{m-1-41}
    u_{M_n}^*\to \tilde{u}\;\;\mbox{weakly star in}\;\;L^\infty(\mathbb{R}^+;L^2(\Omega))\;\;\mbox{as}\;\;n\to\infty.
\end{equation}
    It follows that
\begin{equation}\label{m-1-42}
    \|\tilde{u}\|_{L^\infty(\mathbb{R}^+;L^2(\Omega))}\leq\liminf_{n\to\infty}\|u_{M_n}^*\|
    _{L^\infty(\mathbb{R}^+;L^2(\Omega))}\leq\liminf_{n\to\infty}M_n=0.
\end{equation}
    This gives $\tilde{u}=0$. On the other hand, similar to the proof of (\ref{m-1-12}), we can deduce that
\begin{equation}\label{m-1-43}
    \|y(\tau(M_n);u_{M_n}^*,y_0)-y(T;\tilde{u},y_0)\|_{L^2(\Omega)}\to 0\;\;\mbox{as}\;\;n\to\infty.
\end{equation}
    Since $y(\tau(M_n);u^*_{M_n},y_0)\in B(0,r)$, (\ref{m-1-43}) follows that
\begin{equation}\label{m-1-44}
    y(T;\tilde{u},y_0)=y(T;0,y_0)\in B(0,r).
\end{equation}
    It contradicts  the definition of $\gamma(y_0)$. Then $\lim_{n\to\infty}\tau(M_n)=\gamma(y_0)$.
\vskip 8pt
    \emph{Step 5. $\lim_{M\to\infty}\tau(M)=0$.}
\vskip 5pt
    If it is false, then there exist a sequence $\{M_n\}$ with $M_1\leq M_2\leq\cdots\leq M_n\to \infty$ as $n\to\infty$, such that
\begin{equation}\label{m-1-45}
    \lim_{n\to\infty}\tau(M_n)=2T>0.
\end{equation}
    From the monotonicity of the function $\tau(\cdot)$, the following relation holds:
\begin{equation}\label{m-1-46}
    \tau(M_n)>T>0\;\;\mbox{for}\;\;n\in\mathbb{N}.
\end{equation}
    By the approximate controllability for semilinear heat equations (see Theorem 1.4 in
    \cite{b2}), there  is a control $u\in L^\infty(\mathbb{R}^+;L^2(\Omega))$ independent of $n$ such that $y(T;u,y_0)\in B(0,r)$. Since $M_n\to \infty$ as $n\to\infty$, we can find a $n_0\in\mathbb{N}$ such that
\begin{equation}\label{m-1-47}
    \|u\|_{L^\infty(\mathbb{R}^+;L^2(\Omega))}\leq M_n\;\;\mbox{for all}\;\;n\geq n_0.
\end{equation}
    Hence $u\in\mathcal{U}_{M_n}$. Then by the optimality if $\tau(M_n)$ for $(TP)_{M_n}$, we have $\tau(M_n)\leq T$. It is a contradiction to (\ref{m-1-46}). It follows that the result of $\lim_{M\to \infty}\tau(M)=0$ holds.

\vskip 8pt
    \emph{Step 6. The proof of (\ref{m-theorem-1}).}
\vskip 5pt
    From the conclusions of Step 2 and Step 3, we can conclude that $\tau(\cdot)$ is continuous over $\mathbb{R}^+$. We now prove the first identity in (\ref{m-theorem-1}). Fix a $T\in(0,\gamma(y_0)]$. By Lemma \ref{p-lemma-3}, we know that Problem $(NP)_T$ has an optimal control $v_T^*$. We extend this control by setting it to be zero over $(T,\infty)$. It is clear that this extended control, still denoted by $v_T^*$, satisfies that
\begin{equation}\label{m-1-48}
    \|v_T^*\|_{L^\infty(\mathbb{R}^+;L^2(\Omega))}=\alpha(T)\;\;\mbox{and}\;\;y(T;v_T^*,y_0)
    \in B(0,r).
\end{equation}
    It follows that $v_T^*\in\mathcal{U}_{\alpha(T)}$. By the optimality of $\tau(\alpha(T))$ for $(TP)_{\alpha(T)}$, together with the fact of $y(T;v_T^*,y_0)
    \in B(0,r)$, we have $\tau(\alpha(T))\leq T$. We now prove $\tau(\alpha(T))= T$. It is obvious that when $T=\gamma(y_0)$ then $\alpha(T)=0$ and $\tau(\alpha(\gamma(y_0)))=\gamma(y_0)$. Hence, without loss of generality, we assume that $T<\gamma(y_0)$.
\par
    Suppose that $\tau(\alpha(T))<T$. Since $\tau(\cdot)$ is continuous and strictly monotonically decreasing over $[0,+\infty)$, we can find $\widetilde{M}<\alpha(T)$ such that $\tau(\widetilde{M})=T$.
    From the optimality of $u^*_{\widetilde{M}}$ to Problem $(TP)_{\widetilde{M}}$, we get
\begin{equation}\label{m-1-49}
    \|\chi_{(0,T)}u^*_{\widetilde{M}}\|_{L^\infty(\mathbb{R}^+;L^2(\Omega))}
    =\|u^*_{\widetilde{M}}\|_{L^\infty(0,\tau(\widetilde{M});L^2(\Omega))}\leq \widetilde{M}<\alpha(T)
\end{equation}
    and
\begin{equation}\label{m-1-50}
    y(T;\chi_{(0,T)}u^*_{\widetilde{M}},y_0)=y(\tau(\widetilde{M});u^*_{\widetilde{M}},y_0)\in B(0,r).
\end{equation}
    These follow that
\begin{equation}\label{m-1-51}
    \chi_{(0,T)}u^*_{\widetilde{M}}\in\mathcal{V}_T.
\end{equation}
    This, together with the optimality of $\alpha(T)$ to Problem $(NP)_T$, yields that
\begin{equation}\label{m-1-52}
    \|u^*_{\widetilde{M}}\|_{L^\infty(0,T;L^2(\Omega))}\geq \alpha(T).
\end{equation}
    It is a contradiction to (\ref{m-1-49}). Then, the first identity in (\ref{m-theorem-1}) holds.
\par
    On the other hand, by the first identity in (\ref{m-theorem-1}), we know that $\tau(\alpha(\tau(M)))=\tau(M)$ for any $M>0$. This, together with the strict monotonicity of $\tau(\cdot)$, gives the second identity in (\ref{m-theorem-1}).
    The proof of Theorem \ref{m-theorem1} is now completed.
\end{proof}
\begin{remark}
    In Theorem \ref{m-theorem1}, we have shown that the function $\alpha(\cdot)$ is continuous as a function of $T$. For future research, other regularity results are also of interest, for example about the H\"older continuity of $\alpha(\cdot)$. For the hyperbolic case, results of this type have been presented in \cite{revision-2-1}.
\end{remark}
\par
    In the following, we shall present the proof of Theorem \ref{main-theorem}.
\begin{proof}[The proof of Theorem \ref{main-theorem}]
    Let $v^*_T$ be the optimal control to $(NP)_T$. Then, by the optimality of $v^*_T$ and the first identity in (\ref{m-theorem-1}), we can conclude that
\begin{equation}\label{m-1-53}
    y(\tau(\alpha(T));v^*_T,y_0)\in B(0,r)\;\;
    \mbox{and}\;\; \|v_T^*\|_{L^\infty(0,T;L^2(\Omega))}=\alpha(T).
\end{equation}
    We extend the control $v_T^*$ by zero to $(T,+\infty)$. Hence, the extended control, still denoted by $v_T^*$, is the optimal control to Problem $(TP)_{\alpha(T)}$.
\par
    On the other hand, if $u^*_M$ is the optimal control to Problem $(TP)_M$, then from the optimality of $u^*_M$ and the second identity in (\ref{m-theorem-1}), we get
\begin{equation}\label{m-1-54}
    y(\tau(M);u^*_M,y_0)\in B(0,r)\;\;\mbox{and}\;\;\|u^*_M\|_{L^\infty(\mathbb{R}^+;L^2(\Omega))}\leq
    \alpha(\tau(M)).
\end{equation}
    This implies that $\chi_{(0,\tau(M))}u^*_M$ is the optimal control to $(NP)_{\tau(M)}$.
    We complete the proof of this theorem.
\end{proof}
    Together with Theorem \ref{main-theorem} and Corollary \ref{p-corollary-1}, we have the following corollary:
\begin{corollary}\label{m-corollary1}
    Suppose that $(H_1)$ and $(H_2)$ hold. Then the optimal norm control $v_T^*$ to Problem $(NP)_T$ satisfies that
\begin{equation}\label{m-1-55}
    \|v_T^*\|_{L^2(\Omega)}=\alpha(T)\;\;\mbox{for almost all}\;\;t\in(0,T).
\end{equation}
\end{corollary}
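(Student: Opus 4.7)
The plan is to combine the two main prior results directly. From Theorem \ref{main-theorem}, the optimal norm control $v_T^*$ for $(NP)_T$, once extended by zero to $(T,+\infty)$, is an optimal time control for the problem $(TP)_{\alpha(T)}$. From Theorem \ref{m-theorem1}, the associated optimal time is $\tau(\alpha(T))=T$. So $v_T^*$ (viewed on $\mathbb R^+$) falls exactly under the hypotheses of Corollary \ref{p-corollary-1} with norm bound $M=\alpha(T)$ and minimal time $\tau(M)=T$.

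The key step is then the invocation of Corollary \ref{p-corollary-1}, which yields
\begin{equation*}
\|v_T^*(t)\|_{L^2(\Omega)}=\alpha(T)\quad\text{for almost all }t\in(0,\tau(\alpha(T)))=(0,T).
\end{equation*}
This is exactly the conclusion of Corollary \ref{m-corollary1}. No further estimates are needed; the bang-bang property of the time-optimal problem transports to the norm-optimal problem through the equivalence established in Theorem \ref{main-theorem}.

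The only mild subtlety to record is that Corollary \ref{p-corollary-1} requires $v_T^*$ to be a genuine time-optimal control for $(TP)_{\alpha(T)}$, which in turn depends on $\alpha(T)>0$; when $\alpha(T)=0$, the statement is automatic since then $v_T^*=0$ a.e.\ on $(0,T)$ and $\|v_T^*(t)\|_{L^2(\Omega)}=0=\alpha(T)$. By assumption $(H_2)$, $y_0\notin B(0,r)$, so for $T<\gamma(y_0)$ one has $\alpha(T)>0$ by the strict monotonicity in Theorem \ref{m-theorem1}, and for $T=\gamma(y_0)$ the trivial case applies. Thus no obstacle is expected; the proof reduces to citing Theorem \ref{main-theorem}, Theorem \ref{m-theorem1}, and Corollary \ref{p-corollary-1} in sequence.
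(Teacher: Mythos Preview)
Your proposal is correct and matches the paper's own argument: the corollary is stated as an immediate consequence of Theorem \ref{main-theorem} and Corollary \ref{p-corollary-1}, with no separate proof given. Your added remark isolating the borderline case $\alpha(T)=0$ (where Corollary \ref{p-corollary-1} is not directly applicable since it assumes $M>0$) is a careful touch that the paper leaves implicit.
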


\section*{Appendix: The proof of (\ref{p-1-2})}
\begin{proof}
     Indeed, when $u\equiv0$, multiplying the equation (\ref{m-1-1}) by  $y(t)\equiv y(t;0,y_0)$ and integrating on $\Omega$, we have
\begin{equation}\label{app-1-3}
    \frac{1}{2}\frac{d}{dt}\|y(t)\|^2_{L^2(\Omega)}+\|\nabla y\|^2_{L^2(\Omega)}+\langle f(y(t)),y(t)\rangle_{L^2(\Omega)}=0.
\end{equation}
    Let $\{\lambda_i\}_{i\in \mathbb{N}}$ and $\{e_i\}_{i\in \mathbb{N}}$ be the eigenvalue and eigenvector of $-\triangle$, i.e.,
$$
\begin{cases}
    -\triangle e_i=\lambda_ie_i&\mbox{in}\;\;\Omega,\\
    e_i=0&\mbox{on}\;\;\partial\Omega,
\end{cases}
    \;\;\mbox{for any}\;\;i\in\mathbb{N}.
$$
    From the results of Page 335, Chapter 6 in \cite{b14}, $0<\lambda_1\leq\lambda_2\leq \cdots$ and $\lambda_i\to+\infty$ as $i\to\infty$ hold. Moreover, $\{e_i\}_{i\in\mathbb{N}}$ forms an orthonormal basis of $L^2(\Omega)$. Therefore, for any $y\in H_0^1(\Omega)\subset L^2(\Omega)$, there exists a sequence $\{a_i\}_{i\in\mathbb{N}}$ such that
$$
    y=\sum_{i=1}^\infty a_ie_i
$$
    and
$$
    \lambda_1\|y\|^2_{L^2(\Omega)}\leq\sum_{i=1}^\infty\lambda_ia_i^2\leq\langle-\triangle y,y\rangle_{H^{-1}(\Omega),H_0^1(\Omega)}=\|\nabla y\|^2_{L^2(\Omega)},\;\;\forall y\in H_0^1(\Omega).
$$
    Hence, from the assumption $(H_1)$, the identity (\ref{app-1-3}) yields
\begin{equation}\label{app-1-4}
    \frac{1}{2}\frac{d}{dt}\|y(t)\|_{L^2(\Omega)}^2+\lambda_1\|y\|^2_{L^2(\Omega)}\leq 0.
\end{equation}
    This means that
\begin{equation}\label{app-1-5}
    \frac{d}{dt}\left(e^{2\lambda_1t}\|y(t)\|_{L^2(\Omega)}^2\right)\leq 0.
\end{equation}
    Therefore,
\begin{equation}\label{app-1-6}
    \|y(t)\|_{L^2(\Omega)}^2\leq e^{-2\lambda_1t}\|y_0\|^2_{L^2(\Omega)}.
\end{equation}
    This gives (\ref{m-1-2}).
\end{proof}

\vskip 10pt

\par
    \textbf{Acknowledgment.} The author thanks Professor Gengsheng Wang for stimulating discussions and comments. The author also gratefully acknowledges the anonymous referees for the suggestions which led to this improved version.

\end{document}